\let\OLDthebibliography\thebibliography
\renewcommand\thebibliography[1]{
  \OLDthebibliography{#1}
  \setlength{\parskip}{1pt}
  \setlength{\itemsep}{1pt plus 0.3ex}
}
\newtheorem{theorem}{Theorem}
\newtheorem{prop}{Proposition}
\theoremstyle{definition}
\newtheorem{example}{Example}
\theoremstyle{remark}
\newtheorem{remark}{Remark}
\newtheorem{question}{Problem}
\begin{document}
\title{Multi-switches and virtual knot invariants}

\author{Valeriy Bardakov,~Timur Nasybullov}
\date{}
\begin{abstract} In the paper we introduce a general approach how for a given virtual biquandle multi-switch $(S,V)$ on an algebraic system $X$ (from some category) and a given virtual link  $L$ construct an algebraic system $X_{S,V}(L)$ (from the same category) which is an invariant of $L$. As a corollary we introduce a new quandle invariant for virtual links which generalizes previously known quandle invariants for virtual links.

~\\
\noindent\emph{Keywords: multi-switch, virtual knot, knot invariant, quandle, Yang-Baxter equation.} \\
~\\
\noindent\emph{Mathematics Subject Classification:   57M27, 57M25, 20F36, 20N02, 16T25.}
\end{abstract}
\maketitle
\section{Introduction}
\textit{A set-theoretical solution of the Yang-Baxter equation} is a pair $(X, S)$, where $X$ is a set and $S:X\times X\to X\times X$ is a bijective map such that
$$(S\times id)(id \times S)(S\times id)=(id\times S)(S \times id)(id\times S).$$
The problem of studying set-theoretical solutions of the Yang-Baxter equation was formulated by Drinfel'd in \cite{Dri}. If $(X, S)$ is a set-theoretical solution of the Yang-Baxter equation, then the map $S$ is called \textit{a switch} on $X$ (see \cite{FMK}). A pair of switches $(S,V)$ on $X$ is called \textit{a virtual switch} on $X$ if $V^2=id$ and the equality
$$(V\times id)(id \times S)(V\times id)=(id\times V)(S \times id)(id\times V)$$
holds.

Switches and virtual switches are connected with virtual braid groups and virtual links. Using a (virtual) switch one can construct an integer-valued invariant for (virtual) links, so called coloring-invariant \cite[Section~6]{FMK}. Also using a (virtual) switch on a set $X$ it is possible to construct a representation of the (virtual) braid group on $n$ strands by permutations of $X^n$ \cite[Section~2]{FMK}. Moreover, if $X$ is an algebraic system, then under additional conditions using a (virtual) switch on $X$ it is possible to construct a representation of the (virtual) braid group by automorphisms of $X$. The Artin representation $\varphi_A:B_n\to {\rm Aut}(F_n)$
 (see \cite[Section~1.4]{Bir}), the Burau representation $\varphi_B:B_n\to {\rm GL}_n(\mathbb{Z}[t,t^{-1}])$ (see  \cite[Section~3]{Bir}) and their extensions to the virtual braid groups $\widetilde{\varphi}_A:VB_n\to{\rm Aut}(F_n)$, $\widetilde{\varphi}_B:VB_n\to {\rm GL}_n(\mathbb{Z}[t,t^{-1}])$ (see \cite{Kau, Ver}) can be obtained on this way.

Despite the fact that virtual switches can be used for constructing representations of  virtual braid groups, there are representations $VB_n\to{\rm Aut}(G)$, where $G$ is some group, which cannot be obtained using any virtual switch on $G$. For example, the Silver-Williams representation $\varphi_{SW}:VB_n\to {\rm Aut}(F_{n}*\mathbb{Z}^{n+1})$ (see \cite{SilWil}), the Boden-Dies-Gaudreau-Gerlings-Harper-Nicas representation $\varphi_{BD}:VB_n\to {\rm Aut}(F_{n}*\mathbb{Z}^2)$ (see \cite{BDGGHN}),  the Kamada representation $\varphi_K:VB_n\to {\rm Aut}(F_{n}*\mathbb{Z}^{3n})$ (see \cite{BN}) and the representations $\varphi_M:VB_n\to {\rm Aut}(F_n*\mathbb{Z}^{2n+1})$, $\tilde{\varphi}_M:VB_n\to {\rm Aut}(F_n*\mathbb{Z}^n)$ of Bardakov-Mikhalchishina-Neshchadim (see \cite{BarMikNes, BarMikNes2}) cannot be obtained using any virtual switch. In order to overcome this obstacle, in the paper \cite{BarNas2} we introduce the notion of a (virtual) multi-switch which generalizes the notion of a (virtual) switch, and introduce a general approach how a virtual multi-switch $(S,V)$ on an algebraic system $X$ can be used for constructing a representation $\varphi_{S,V}:VB_n\to {\rm Aut}(X)$.  All the representations above can be obtained using this approach for appropriate virtual multi-switches. 

In the present paper we continue studying applications of virtual multi-switches. Namely, we introduce a general approach how virtual multi-switches can be used for constructing invariants of virtual links. For a given virtual multi-switch $(S,V)$ on an algebraic system $X$ (from some category) and a given virtual link $L$ we introduce the algebraic system $X_{S,V}(L)$ (from the same category as $X$) which is an invariant of $L$. As a corollary, we construct a new quandle invariant $\widetilde{Q}(L)$ for virtual links which generalizes the quandle of Manturov \cite{Man2}  and the quandle of Kauffman \cite{Kau}.

So, every multi-switch on an algebraic system leads to a virtual link invariant which is an algebraic system. The invariants which are algebraic systems are of special importance in the theory since usually they are very strong (for example, the knot quandle is a complete invariant for classical links under weak equivalence \cite{Joy,Mat}), and they lead to a lot of different invariants (see, for example, \cite{AHN,CSV,FMK}). 

The paper is organized as follows. In Section~\ref{knotandlinks}, we give necessary preliminaries about virtual  links. In Section~\ref{multnewmult}, we give preliminaries about multi-switches. In Section~\ref{secinv}, for a given multi-switch $(S,V)$ on an algebraic system $X$ and a given virtual link $L$ we introduce the algebraic system $X_{S,V}(L)$ and prove that $X_{S,V}(L)$ is a virtual link invariant (Theorem~\ref{ginvariant}). In Section~\ref{representationshelp}, we find a way how to construct the algebraic system $X_{S,V}(L)$ using the representation $\varphi_{S,V}:VB_n\to {\rm Aut}(X)$ introduced in \cite{BarNas2} (Theorem~\ref{usingrepr}). Finally, in Section~\ref{newquandlenorm}, we introduce a new quandle invariant for virtual links (Theorem~\ref{nquaninv2} and Theorem~\ref{nquaninv}).
\subsection*{Acknowledgement} The results are supported by the grant of the Russian Science Foundation (project 19-41-02005).

\section{Virtual knots and links}\label{knotandlinks}
In this section we recall definitions of virtual knots and links. Virtual links were introduced by Kauffman \cite{Kau} as a generalization of classical links. Topologically,
virtual links can be interpreted as isotopy classes of embeddings of classical links in thickened orientable
surfaces. \textit{A virtual $n$-component link diagram} is a regular immersion of $n$ oriented circles (i.~e. an immersion, such that there are no tangent components, and all intersection points are double points) such that every crossing is either classical (see Figure~\ref{possiblecrossings}~($a$, $b$)) or virtual (see Figure~\ref{possiblecrossings}~($c$)).
\begin{figure}[hbt!]
\noindent\centering{
\includegraphics[width=100mm]{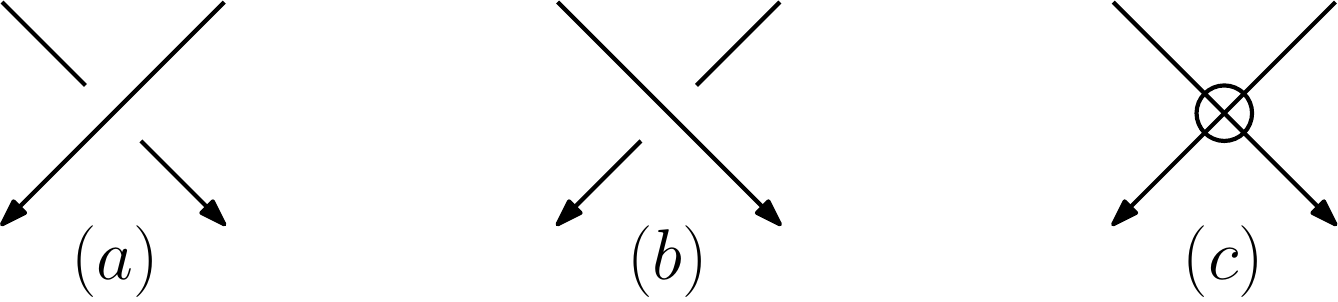}}
\caption{Crossings in the virtual link diagram.}
\label{possiblecrossings}
\end{figure}
The crossing depicted on Figure~\ref{possiblecrossings}~($a$) is called \textit{a positive crossing}, the crossing depicted on Figure~\ref{possiblecrossings}~($b$) is called \textit{a negative crossing}, and the crossing depicted on Figure~\ref{possiblecrossings}~($c$) is called \textit{a virtual crossing}. A virtual $1$-component link diagram is called \textit{a virtual knot diagram}. 

Two virtual link diagrams $D_1, D_2$ are said to be \textit{equivalent} if the diagram $D_1$ can be transformed to the diagram $D_2$ by planar isotopies and three types of
local moves (generalized Reidemeister moves): classical Reidemeister moves $R_1, R_2, R_3$ (see
Figure~\ref{rclass}), virtual Reidemeister moves $VR_1, VR_2, VR_3$ (see Figure~\ref{rvirt}), and mixed Reidemeister moves $VR_4$ (see Figure~\ref{rmix}). The equivalence class of a given virtual $n$-component link diagram is called \textit{a virtual $n$-component link}. A virtual $1$-component link is called \textit{a virtual knot}.
\begin{figure}[hbt!]
\noindent\centering{
\includegraphics[width=145mm]{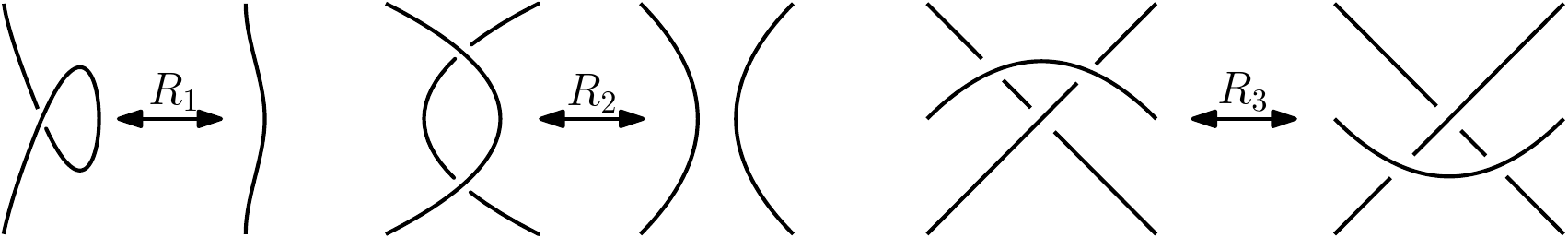}}
\caption{Classical Reidemeister moves.}
\label{rclass}
\end{figure}
\begin{figure}[hbt!]
\noindent\centering{
\includegraphics[width=145mm]{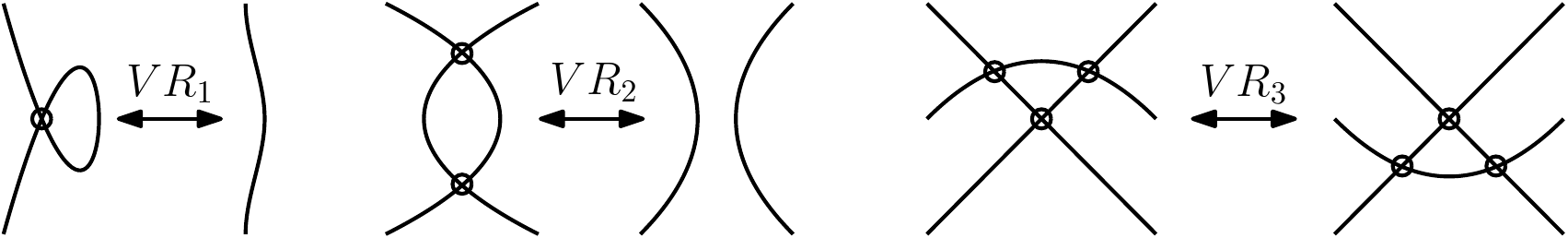}}
\caption{Virtual Reidemeister moves.}
\label{rvirt}
\end{figure}
\begin{figure}[hbt!]
\noindent\centering{
\includegraphics[width=62mm]{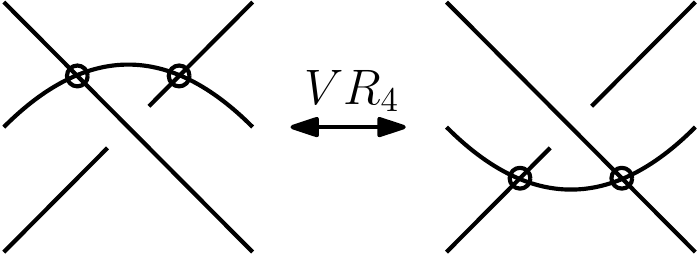}}
\caption{Mixed Reidemeister moves.}
\label{rmix}
\end{figure}

Let $A$ be an arbitrary non-empty set. The map $f$ from the set of all virtual link diagrams to $A$ is called \textit{an ($A$-valued) invariant} for virtual links if it maps equivalent link diagrams to the same element of $A$. Due to the definition of equivalent virtual link diagrams, the map $f$ is an invariant for virtual links if and only if $f(D_1)=f(D_2)$ whenever $D_1$ is obtained from $D_2$ using only one generalized Reidemeister move. If $f$ is an invariant for virtual links, and $L$ is a virtual link (i.~e. an equivalence class of some virtual link diagram $D$), then we write $f(L)=f(D)$.
\section{Switches and multi-switches}\label{multnewmult}
\subsection{Switches and the Yang-Baxter equation} \textit{A set-theoretical solution of the Yang-Baxter equation} is a pair $(X, S)$, where $X$ is a set, and $S:X^2\to X^2$ is a map such that
\begin{equation}\label{YB}(S\times id)(id \times S)(S\times id)=(id\times S)(S \times id)(id\times S).
\end{equation}
 If $(X,S)$ is a set theoretical solution of the Yang-Baxter equation such that the map $S:X^2\to X^2$ is bijective, then the map $S$ is called \textit{a switch} on $X$ (see \cite[Section~2]{FMK}). Let $S$ be a switch on $X$ such that  $S(x,y)=(S^l(x,y),S^r(x,y))$ for some maps $S^l, S^r:X^2\to X$ and  $x,y\in X$.  A switch $S$ is said to be \textit{non-degenerate} if for all elements $a,b\in X$ the maps $S^l_a, S^r_b:X\to X$ given by the formulas
\begin{align}
\notag S^l_a(x)=S^l(a,x),&&S^r_b(x)=S^r(x,b)
\end{align}
for $x\in X$ are invertible.  A switch $S$ is called \textit{involutive} if $S^2=id$.

 If $X$ is not just a set but an algebraic system: a group, a module etc, then every switch $S$ on $X$ is called, respectively, a group switch, a module switch etc. 
\begin{example}\label{twist}
 Let $X$ be an arbitrary set, and $T(a,b)=(b,a)$ for all $a, b\in X$. The map $T$ is a switch which is called \textit{the twist}.
 It is clear that $T$ is involutive and non-degenerate.
\end{example}
\begin{example}\label{Artin switch}
Let $X$ be a group. Then the map $S_A$ defined by $
S_A(a, b) = (a b a^{-1}, a)$ for $a,b\in X$ is a switch which is called \textit{the Artin switch} on a group $X$. This switch is non-degenerate.
\end{example}
In order to introduce the next example of a switch, let us recall that \textit{a quandle} $Q$ is an algebraic system with one binary algebraic operation $(a,b)\mapsto a*b$ which satisfies the following three axioms.
\begin{enumerate}
	\item For all $a\in Q$ the equality $a*a=a$ holds.
	\item The map $I_a:b\mapsto b*a$ is a bijection of $Q$ for all $a\in Q$. For $a,b\in Q$ we denote by $a*^{-1}b=I_b^{-1}(a)$. 
	\item For all $a,b,c\in Q$ the equality $(a*{b})*c=(a*c)*(b*c)$ holds.
\end{enumerate}
A quandle $Q$ is called trivial if $a*b=a$ for all $a,b\in Q$, the trivial quandle with $n$ elements is denoted by $T_n$. 

Quandles were introduced in \cite{Joy, Mat} as an invariant for classical links. More precisely,  to each oriented diagram $D$ of an oriented knot $K$ in $\mathbb{R}^3$ one can associate the quandle $Q(K)$ which does not change if we apply the Reidemeister moves to the diagram $D$. Kauffman \cite{Kau} extended the idea of the quandle invariant $Q(K)$ from classical links to virtual links by ignoring virtual crossings (ignoring virtual crossings makes the quandle invariant for virtual links weaker than for classical links). Manturov \cite{Man2}  constructed a quandle invariant for virtual links which generalizes the quandle of Kauffman \cite{Kau}. Over the years, quandles have been investigated by various authors for constructing new invariants for knots and links (see, for example, \cite{Carter, CatNas, Fenn-Rourke, Kamada20122, NanSinSin, Nelson}). Algebraic properties of quandles including their automorphisms and residual properties have been investigated, for example, in \cite{BDS, BNS, BSS, BiaBon, Clark, NelWon}. For more details about quandles see \cite{Carter, ElhNel, Nos}.

The following  example gives a quandle switch.
\begin{example}\label{quasw} Let $(X,*)$ be a quandle. Then the map $S_Q(a, b) = (b, a*b)$ for $a, b \in X$ is a quandle switch. This switch is non-degenerate.
\end{example}

A lot of examples of switches on different algebraic systems can be found, for example, in \cite[Section~2]{BarNas2}, \cite[Section~3]{FB0} and \cite[Section~2]{FMK}. 

\subsection{Switches and invariants of virtual links}\label{swinv} Switches can be used for constructing invariants for virtual links. Let $S$ be a switch on $X$ such that
$$S(a,b)=(S^l(a,b), S^r(a,b))$$
for maps $S^l, S^r:X^2\to X$ and $a,b\in X$. For $a,b\in X$ denote by  $S^l(a,b)=b_a$, $S^r(a,b)=a^b$, so, on $X$ we have two binary operations $(a, b) \mapsto a^b$, $(a, b) \mapsto a_b$, which are called \textit{the up operation} and \textit{the down operation} defined by $S$, respectively. The Yang-Baxter equation for $S$ implies the following equalities
\begin{align}
\label{biqiq} a^{bc}=a^{c_bb^c},&&a_{bc}=a_{c^bb_c},&&(a_b)^{c_{b^a}}=(a^{c})_{b^{c_a}}
\end{align}
for all $a,b,c\in X$.  A switch $S$ is called \textit{a biquandle switch} on $X$ if the following two conditions hold.
\begin{enumerate}
\item The switch $S$ is non-degenerate, i.~e. the maps $S^l_a,S^r_a:X\to X$ given by  $S^l_a(x)=x_a$, $S^r_a(x)=x^a$ are bijective. We denote by 
\begin{align}
\notag b^{a^{-1}}=(S^r_a)^{-1}(b),&& b_{a^{-1}}=(S^l_a)^{-1}(b).
\end{align} 
\item $a^{a^{-1}}=a_{a^{a^{-1}}}$ and $a_{a^{-1}}=a^{a_{a^{-1}}}$ for all $a\in X$.
\end{enumerate}
The switches from Examples~\ref{twist}, \ref{Artin switch}, \ref{quasw} are biquandle switches. If $S$ is a biquandle switch on $X$, then the set $X$ with the up and the down operations defined by $S$ is called \textit{a biquandle} and is denoted by $(X,S)$. If condition (2) of the biquandle switch does not hold, then the algebraic system $(X,S)$ is called \textit{a birack}.

Biquandles were introduced in \cite{FMK} as a tool for constructing invariants for virtual knots and links. Let $S$ be a biquandle switch on $X$ with 
$$S(a,b)=(S^l(a,b), S^r(a,b))$$ 
for maps $S^l, S^r:X^2\to X$, and $a,b\in X$. Since $S$ is a biquandle switch on $X$, the algebraic system $(X,S)$ is a biquandle. Let the inverse to $S$ map $S^{-1}:X^2\to X^2$ has the form 
$$S^{-1}(a,b)=(Q^l(a,b), Q^r(a,b))$$
for maps $Q^l, Q^r:X^2\to X$, and $a,b\in X$.

Let $D$ be a virtual link diagram. A strand of $D$ going from one crossing (classical or virtual) to another crossing (classical or virtual) is called \textit{an arc} of $D$. \textit{A labelling} of the diagram $D$ by elements of the biquandle $(X,S)$ is a marking of arcs of $D$ by elements of $X$ such that near each crossing of $D$  the marks of arcs are as on Figure~\ref{biqrel}. 
\begin{figure}[hbt!]
\noindent\centering{
\includegraphics[width=120mm]{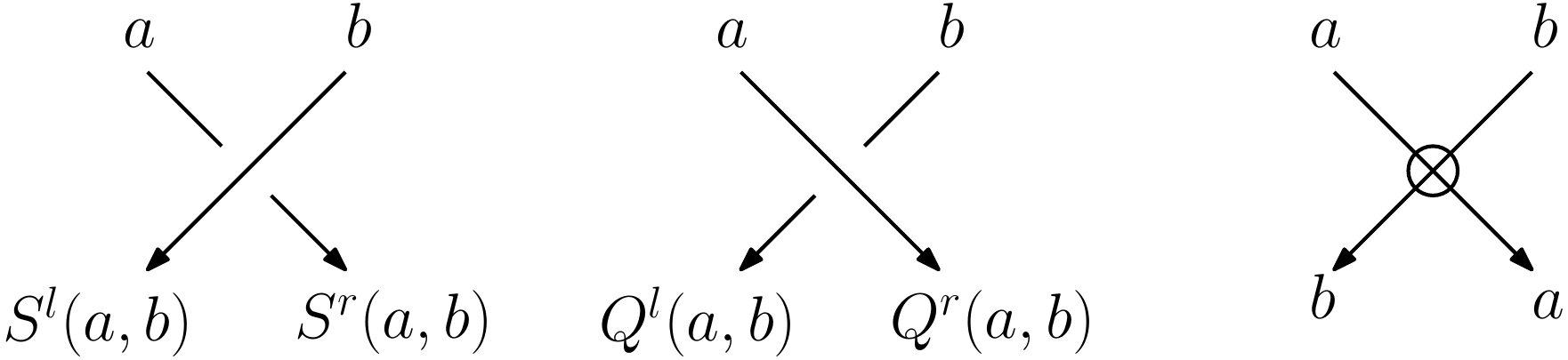}}
\caption{Marks of arcs in $D$.}
\label{biqrel}
\end{figure}
A diagram $D$ can have zero, one or several (even infinitely many) labellings by elements of the biquandle $(X,S)$. The set of all labellings of the diagram $D$ by elements of the biquandle $(X,S)$ is denoted by ${\rm lab}_{(X,S)}(D)$. If $X$ is a finite set, then the set ${\rm lab}_{(X,S)}(D)$ is finite.

Fenn, Jordan-Santana and Kauffman in \cite[Theorem~6.12]{FMK} proved that if $D_1$, $D_2$ are equivalent virtual link diagrams, then each labelling of the diagram $D_1$ provides a unique labelling of the diagram $D_2$. In particular, if $X$ is a finite set, then $\left|{\rm lab}_{(X,S)}(D_1)\right|=\left|{\rm lab}_{(X,S)}(D_2)\right|$, i.~e. the integer $\left|{\rm lab}_{(X,S)}(D)\right|$ is an invariant for virtual links. This invariant is called \textit{the biquandle-coloring invariant} defined by $(X,S)$ and is denoted by $C_{(X,S)}(D)$. So, every biquandle switch provides an invariant for virtual links. Papers \cite{CSWES, KM, F, HK, FB, BarNasSin14} give several application of biquandles in knot theory.

\subsection{Multi-switches and virtual multi-switches} Let $X$ be a non empty set, and $X_1, X_2, \dots, X_m$ be non-empty subsets of $X$. One can identify the sets
\begin{align}
\notag (X\times X_1\times X_2\times\dots\times X_m)^2&&\text{and}&&X^2\times X_1^2\times X_2^2\times\dots\times X_m^2
\end{align}
and for elements $A=(a_0,a_1,\dots,a_m), B=(b_0,b_1,\dots,b_m)$ from $X\times X_1\times X_2\times\dots\times X_m$ denote the ordered pair $(A,B)$ by 
$$(A,B)=(a_0,b_0; a_1,b_1;a_2,b_2;\dots;a_m,b_m).$$

We say that a map $S:X^2\times X_1^2\times \dots\times X_m^2\to X^2\times X_1^2\times \dots\times X_m^2$ is \textit{an $(m+1)$-switch}, or \textit{a multi-switch} on $X$ (if $m$ is not specified) if $S$ is a switch on $X\times X_1\times X_2\times\dots\times X_m$, such that
$$S(c_0,c_1,\dots,c_m)=(S_0(c_0,c_1,\dots,c_m),S_1(c_1),\dots,S_m(c_m))$$
for $c_0\in X^2$, $c_i\in X_i^2$ for $i=1,2,\dots,m$,
and $S_0, S_1,\dots,S_m$ are the maps
\begin{align}
\notag S_0&:X^2 \times X_1^2 \times \dots \times X_m^2 \to X^2,\\
\notag S_i&:X_i^2  \to X_i^2,~\text{for}~i = 1, 2, \dots, m.
\end{align}
If $S$ is an $(m+1)$-switch on $X$ defined by the maps $S_0,S_1,\dots,S_m$, then we write $S=(S_0,S_1,\dots,S_m)$. Note that for $i=1,2,\dots,m$ the map $S_i$ is a switch on $X_i$.  If $X$ is not just a set but an algebraic system: group, module etc, then every multi-switch on $X$ is called, respectively, a group multi-switch, a module multi-switch etc. We do not require here that $X_1, X_2, \dots,X_m$ are subsystems of $X$.
\begin{example}\label{exx7}Every switch on $X$ is a $1$-switch on $X$.
\end{example}
\begin{example}\label{exx8}If $S$ is a switch on $X$, and  $S_i$ is a switch on $X_i\subset X$ for $i=1,2,\dots,m$, then the map $S\times S_1\times \dots\times S_m$ is an $(m+1)$-switch on $X$.
\end{example}
The multi-switches from Example~\ref{exx7} and Example~\ref{exx8} are in some sense trivial. The following example introduced in \cite[Proposition~1]{BarNas2} gives a non-trivial example of a module $2$-switch.
\begin{example}Let $R$ be an integral domain, $X$ be a free module over $R$, and $X_1$ be a subset of the multiplicative group of $R$ (one can think about $X_1$ as about a subset of $X$ identifying $X_1\subset R$ with  $X_1x_0\subset X$ for a fixed non-zero element $x_0$ from $X$). Then the map $S:X^2\times X_1^2\to X^2\times X_1^2$ given by
\begin{align}
\notag S(a, b; x, y) = ((1 - y) a + x b, a; y, x)&&a,b\in X,~x,y\in X_1
\end{align}
is a $2$-switch on $X$.
\end{example}

Since an $(m+1)$-switch on $X$ is a switch on $X\times X_1\times X_2\times\dots\times X_m$, the notion of the involutive $(m+1)$-switch follows from the same notion for switches. Let $S, V: X^2\times X_1^2\times X_2^2\times\dots\times X_m^2\to X^2\times X_1^2\times X_2^2\times\dots\times X_m^2$ be an $(m+1)$-switch and an involutive $(m+1)$-switch on $X$, respectively. We say that the pair $(S,V)$ is \textit{a virtual $(m+1)$-switch} on $X$ (or a \textit{virtual multi-switch} on $X$, if $m$ is not specified) if the following equality holds
\begin{equation}\label{VYB}
 (id \times V) (S \times id)(id \times V)  =   (V \times id)(id \times S)(V \times id),
\end{equation}
where $id$ denotes the identity on $X\times X_1\times X_2\times\dots\times X_m$. Some examples of virtual multi-switches can be found in \cite{BarNas2}.

\section{Multi-switches and knot invariants}\label{secinv}
In Section~\ref{swinv} we noticed that biquandle switches can be used for constructing invariants for virtual links. Namely, if $S$ is a biquandle switch on a finite set $X$, then the number $C_{(X,S)}(D)$ of labellings of a virtual link diagram $D$ by elements of the biquandle $X$ is an integer-valued invariant for virtual links. 
 In this section we introduce a general construction how a given multi-switch on an algebraic system $X$ can be used for constructing an algebraic system which is an invariant for virtual links. Using this approach it is possible to construct a group of a link, a quandle of a link, a biquandle of a link,  a module of a link, a skew brace of a link (see \cite{GuaVen,Nas, SmoVen} for details about skew braces) etc.

  Let $X$ be an algebraic system, and $X_0,X_1,\dots, X_m$ be subsystems of $X$ such that
\begin{enumerate}
\item   for $i=0,1,\dots,m$ the subsystem $X_i$ is generated by elements $x^{i}_{1},x^{i}_{2},\dots$ (which are not necessarily all different),
\item $\{x^{i}_{1},x^{i}_{2},\dots\}\cap\{x^{j}_{1},x^j_2,\dots\}=\varnothing$ for $i\neq j$,
\item the set of elements $\{x^{i}_j~|~i=0,1,\dots,m, j=0,1,\dots\}$ generates $X$,
\item for every permutation $\alpha$ of $\mathbb{N}$ with a finite support the map $x^i_j\mapsto x^i_{\alpha(j)}$ for $i=0,1,\dots,m$, $j=1,2,\dots$ induces an automorphism of $X$.
\end{enumerate}
\begin{remark}In this section we require that $X_1, X_2, \dots,X_m$ are subsystems of $X$, while in the definition of the virtual multi-switch we do not require this condition. 
\end{remark}
\begin{remark}Condition (4) implies that for a fixed $i$ the elements $x^i_1, x^i_2,\dots$ are either all different, or all coincide.
\end{remark}
 For $n\geq 2$, $i=0,1,\dots,m$ denote by $X_i^{(n)}$ the subsystem of $X_i$ generated by $x^i_1,x^i_2,\dots,x^i_n$, and by $X^{(n)}$ the subsystem of $X$ generated by $X_0^{(n)}, X_1^{(n)},\dots,X_m^{(n)}$.
\begin{remark}\label{remark}
From condition (4) follows that for $n>1$ the algebraic system $X^{(n)}$ is isomorphic to the quotient of $X^{(n+1)}$ by $(m+1)$ relations 
\begin{align}\label{alot of relations in one}
 x_{n+1}^0=x_n^0,&& x_{n+1}^1=x_n^1,&&\dots&&x_{n+1}^m=x_n^m
\end{align}
(we add to $X^{(n)}$ generators $x_{n+1}^0,x_{n+1}^1,\dots,x_{n+1}^m$ and say that they are equal to $x_{n}^0, x_{n}^1, \dots, x_n^m$). We will write $(m+1)$ equalities from (\ref{alot of relations in one}) as one equality of $(m+1)$-tuples
$$(x_{n+1}^0,x_{n+1}^1,\dots,x_{n+1}^m)=(x_{n}^0, x_{n}^1, \dots, x_n^m).$$
\end{remark}

Let $S=(S_0,S_1,\dots,S_m)$, $V=(V_0,V_1,\dots,V_m)$  be a virtual $(m+1)$-switch on $X$ such that
\begin{align}
\notag S_0=(S_0^l,S_0^r), V_0=(V_0^l,V_0^r)&:X^2 \times X_1^2 \times X_2^2 \times \dots \times X_m^2 \to X^2,\\
\notag S_i=(S_i^l,S_i^r), V_i=(V_i^l,V_i^r)&:X_i^2  \to X_i^2,~\text{for}~i = 1, 2, \dots, m,
\end{align}
and for $i=0,1,\dots,m$ the images of maps $S_i^l,S_i^r, V_i^l, V_i^r$  are words over its arguments in terms of the operations of $X$. From this fact follows, in particular, that $(S,V)$ induces a virtual $(m+1)$-switch on $X^{(n)}$ for all $n$. Since the map
$$S:X^2\times X_1^2\times X_2^2\dots\times X_m^2\to X^2\times X_1^2\times X_2^2\dots\times X_m^2$$
is a switch, it is invertible, and we can speak about $S^{-1}$.

Let $D$ be a virtual link diagram with $n$ arcs.  Mark the arcs of $D$ by $(m+1)$-tuples $\widetilde{x}_j=(x^0_j, x^1_j,\dots,x^m_j)$ for $j=1,2,\dots,n$, where  $x^0_j, x^1_j,\dots,x^m_j$ are generators of $X$ described at the beginning of this section.  Let the labels of arcs near some crossing be as on Figure~\ref{newconlabels} (the big circle in the middle of the crossing means that the crossing can be positive, negative, or virtual).
\begin{figure}[hbt!]
\noindent\centering{
\includegraphics[width=30mm]{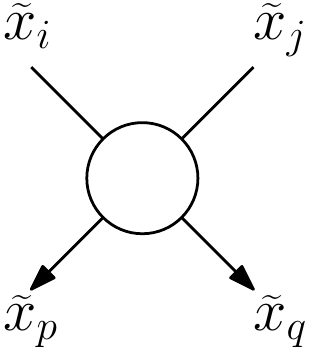}}
\caption{Labels of arcs of $D$ near some crossing.}
\label{newconlabels}
\end{figure}

By the definition, a subsystem of $X$ generated by all $x_j^{i}$ from labels on arcs of $D$ is $X^{(n)}$. Denote by $X_{S,V}(D)$ the quotient of $X^{(n)}$ by the relations which can be written from the crossings of $D$ in the following way:
\begin{align}
\notag S(\widetilde{x}_i,\widetilde{x}_j)&=(\widetilde{x}_p,\widetilde{x}_q),&&\text{if the crossing is positive},\\
\notag S^{-1}(\widetilde{x}_i,\widetilde{x}_j)&=(\widetilde{x}_p,\widetilde{x}_q),&&\text{if the crossing is negative},\\
\notag V(\widetilde{x}_i,\widetilde{x}_j)&=(\widetilde{x}_p,\widetilde{x}_q),&&\text{if the crossing is virtual},
\end{align}
where the labels are as on Figure~\ref{newconlabels}. Note that each crossing of $D$ gives $2(m+1)$ relations, so, the number of relations is equal to the number of crossings multiplied by $2(m+1)$.
\begin{theorem}\label{ginvariant} Let $X$ be an algebraic system, and $X_0,X_1,\dots,X_m$ be subsystems of $X$ which satisfy conditions (1)-(4) from the beginning of this section. If $(S,V)$ is a virtual $(m+1)$-switch on $X$ such that the maps $S,V$ are biquandle switches on $X\times X_1\times X_2\times \dots\times X_m$, then $X_{S,V}(D)$ is an invariant for virtual links.
\end{theorem}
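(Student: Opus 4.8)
\section*{Proof proposal}

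The plan is to show that the algebraic system $X_{S,V}(D)$ does not change, up to isomorphism, under any of the generalized Reidemeister moves $R_1,R_2,R_3$, $VR_1,VR_2,VR_3$, $VR_4$; by the criterion recalled at the end of Section~\ref{knotandlinks} this is enough. Throughout I would work with a presentation of $X_{S,V}(D)$ obtained from a presentation of $X^{(n)}$ by taking one generating tuple $\widetilde{x}_j=(x^0_j,\dots,x^m_j)$ per arc of $D$ and adjoining the $2(m+1)$ relations contributed by each crossing. A preliminary remark used everywhere below: since for $i=0,1,\dots,m$ the components $S_i^l,S_i^r,V_i^l,V_i^r$ are words in the operations of $X$, the identities enjoyed by $S$ and $V$ --- the Yang--Baxter equation~(\ref{YB}) for $S$, the Yang--Baxter equation for the switch $V$, the involutivity $V^2=\mathrm{id}$, the virtual Yang--Baxter equation~(\ref{VYB}), and the biquandle-switch conditions for $S$ and for $V$ --- descend to the switches that $S$ and $V$ induce on every quotient of $X^{(n)}$, so they may be applied directly to the tuples labelling the arcs of $D$.

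When a move is applied, $D_1$ and $D_2$ agree outside a disc, and by condition~(4) one may index the arcs outside the disc so that they carry the same tuples in both diagrams; it then remains to compare the local presentations, in which the number of arcs --- hence of generating tuples --- may change. Any such change is absorbed by Remark~\ref{remark} together with the standard fact that adjoining to an algebraic system a new generator and a relation equating it to a word in the existing generators does not alter the isomorphism type. For $R_2$ and $VR_2$ the two crossings of $D_1$ impose $S$ followed by $S^{-1}$ (respectively $V$ followed by $V$, using $V^2=\mathrm{id}$) on the incoming tuples; consequently the two interior tuples become words in the incoming ones and the two outgoing tuples are forced equal to the incoming ones, and eliminating these four tuples turns the presentation of $X_{S,V}(D_1)$ into that of $X_{S,V}(D_2)$. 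All orientation variants behave identically because $S$ and $V$ are bijective. For $R_3$ the equality of the two resulting systems of relations is exactly the Yang--Baxter equation~(\ref{YB}) for $S$; for $VR_3$ it is the Yang--Baxter equation for the switch $V$; for $VR_4$ it is the virtual Yang--Baxter equation~(\ref{VYB}), its variant with a negative classical crossing following from~(\ref{VYB}), $V^2=\mathrm{id}$ and the invertibility of $S$.

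The moves $R_1$ and $VR_1$ are where the full biquandle-switch hypothesis is used. In $D_1$ the kink creates a single self-crossing; writing out its $2(m+1)$ relations and using non-degeneracy of $S$ to solve for the tuple labelling the small loop, condition~(2) in the definition of a biquandle switch (the identities $a^{a^{-1}}=a_{a^{a^{-1}}}$ and $a_{a^{-1}}=a^{a_{a^{-1}}}$, for the switch $S$ on $X\times X_1\times\dots\times X_m$) is precisely what forces the tuple leaving the kink to equal the tuple entering it; after eliminating the loop tuple and the outgoing tuple one recovers the presentation of $X_{S,V}(D_2)$. The move $VR_1$ is handled identically with $V$ in place of $S$, using that $V$ too is a biquandle switch. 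Assembling the seven cases yields $X_{S,V}(D_1)\cong X_{S,V}(D_2)$ whenever $D_1$ and $D_2$ differ by one generalized Reidemeister move, and the theorem follows.

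I expect the only genuine difficulty to be organizational rather than conceptual: one must set up the labelled local pictures for each orientation of the strands in $R_2$, $R_3$ and $VR_4$ and check the bookkeeping of generators versus relations consistently in each case, in particular verifying that every move balances the change in the number of arcs against the change in the number of crossings, so that the auxiliary tuples introduced by the move can indeed be removed by the presentation manipulations above. The conceptual content is entirely the matching of the seven move types with the properties of $(S,V)$ collected in the preliminary remark.
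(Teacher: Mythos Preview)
Your approach is essentially the same as the paper's: match each generalized Reidemeister move with the corresponding algebraic identity of $(S,V)$ and eliminate the auxiliary generating tuples via Tietze-type moves. The paper proceeds exactly this way, and your treatment of $R_1$, $VR_1$, $R_3$, $VR_3$, $VR_4$ lines up with theirs (for the third-type moves the paper invokes Turaev's observation that all orientation variants reduce to a single one modulo second-type moves, which is worth citing to avoid eight case checks).

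One point is understated. For $R_2$ with anti-parallel strands the local relations are \emph{not} ``$S$ followed by $S^{-1}$'': the two crossings involve the up and down operations in a mixed pattern (e.g.\ $(\widetilde{x}_i)_{\widetilde{x}_{n+1}}=\widetilde{x}_{n+2}$, $(\widetilde{x}_{n+1})^{\widetilde{x}_i}=\widetilde{x}_j$ at one crossing and the analogous pair at the other), and to conclude $\widetilde{x}_i=\widetilde{x}_p$, $\widetilde{x}_j=\widetilde{x}_q$ one must invert the one-variable maps $a\mapsto a^b$ and $a\mapsto a_b$, i.e.\ use non-degeneracy of the biquandle switch, not merely bijectivity of $S$ as a map on pairs. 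You have this hypothesis available, so the argument goes through, but your sentence ``All orientation variants behave identically because $S$ and $V$ are bijective'' is not correct as stated and this case is slightly more than organizational.
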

\begin{proof} Due to condition (4) from the beginning of this section, for every permutation $\alpha$ of $\mathbb{N}$ with a finite support the map $x_j^{i}\mapsto x^i_{\alpha(j)}$ for $i=0,1,\dots,m$, $j=1,2,\dots$ induces an automorphism of $X$. From this condition follows that the algebraic system $X_{S,V}(D)$ is well defined, i.~e. it is not important how (in which sequence) we label the arcs in $D$.

In order to prove that $X_{S,V}$ is an invariant for virtual links, it is enough to prove that $X_{S,V}(D_1)$ is isomorphic to $X_{S,V}(D_2)$ in the case when $D_2$ is obtained from $D_1$ using only one generalized Reidemeister move (see Figures~\ref{rclass}, \ref{rvirt}, \ref{rmix}). Depending on this generalized Reidemeister move we consider several cases.

\textit{Case 1: $D_2$ is obtained from $D_1$ by $R_1$-move or $VR_1$-move.} We consider in details the case of the move $R_1$. The diagrams $D_1$ and $D_2$ differ only in the small neighborhood, where $D_1$ is as on the left part of Figure~\ref{r1difference}, and $D_2$ is as on the right part of Figure~\ref{r1difference}.
\begin{figure}[hbt!]
\noindent\centering{
\includegraphics[width=25mm]{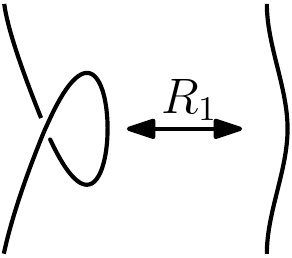}}
\caption{The neighborhood, where $D_1$ and $D_2$ are different: $D_1$ is on the left, $D_2$ is on the right.}
\label{r1difference}
\end{figure}

Suppose that outside of the neighborhood where the $R_1$-move is applied diagrams $D_1$ and $D_2$ have $n$ arcs. Label the arcs of $D_1$ and $D_2$ (which are not labeled yet) inside of the neighborhood where the $R_1$-move is applied by additional labels. Then in the neighbourhood where we apply the $R_1$-move the arcs of the diagram $D_2$ have the labels depicted on Figure~\ref{r12}.
\begin{figure}[hbt!]
\noindent\centering{
\includegraphics[width=5mm]{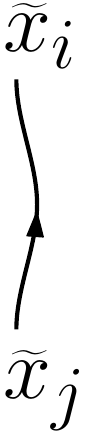}}
\caption{Labels of the arcs in $D_2$.}
\label{r12}
\end{figure}
Therefore $X_{S,V}(D_2)$ is the quotient of $X^{(n)}$ by the relations which can be written from the part of the diagram outside of Figure~\ref{r12} and $m+1$ relations
$$\widetilde{x}_i=\widetilde{x}_j$$
from the part of the diagram on Figure~\ref{r12}. 

Let us find $X_{S,V}(D_1)$. In the neighbourhood where we apply the $R_1$-move the arcs of the diagram $D_1$ have the labels depicted on Figure~\ref{r11}. 
\begin{figure}[hbt!]
\noindent\centering{
\includegraphics[width=20mm]{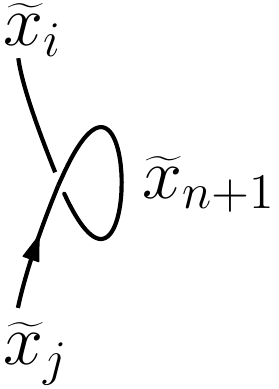}}
\caption{Labels of the arcs in $D_1$.}
\label{r11}
\end{figure}
By the definition the algebraic system $X_{S,V}(D_1)$ is the quotient of $X^{(n+1)}$ ($n$ arcs outside of Figure~\ref{r11} and one extra arc inside of Figure~\ref{r11}) by the relations which can be written from the part of the diagram outside of Figure~\ref{r11} and $2(m+1)$ relations
\begin{equation}\label{eqr1}
S(\widetilde{x}_{n+1}, \widetilde{x}_j)=(\widetilde{x}_{n+1},\widetilde{x}_i)
\end{equation}
which are written from the crossing on Figure~\ref{r11}. For the elements $A,B$ from the set $X\times X_1\times X_2\times\dots\times X_m$ denote by $S(A,B)=(B_A,A^B)$, where $B_A,A^B$ are the elements from  $X\times X_1\times \dots\times X_m$.  Since $S$ is a biquandle switch on $X\times X_1\times X_2\times\dots\times X_m$, the maps $B\mapsto B_A$, $B\mapsto B^A$ are invertible and we can speak about $B_{A^{-1}}$, $B^{A^{-1}}$ for $A,B\in X\times X_1\times \dots\times X_m$. In these denotations equalities (\ref{eqr1}) can be written in the form
\begin{align}
\label{cord1} (\tilde{x}_j)_{\tilde{x}_{n+1}}&=\tilde{x}_{n+1},\\
\label{cord2} (\tilde{x}_{n+1})^{\tilde{x}_j}&=\tilde{x}_i.
\end{align}
From equality~(\ref{cord2}) follows that $\tilde{x}_{n+1}=(\tilde{x}_i)^{\tilde{x}_j^{-1}}$. Since the images of the maps $S_k^l,S_k^r$  are words over its arguments in terms of operations of $X$ for $k=0,1,\dots,m$, the equality $\tilde{x}_{n+1}=(\tilde{x}_i)^{\tilde{x}_j^{-1}}$ implies that the generators $x_{n+1}^0,x_{n+1}^1,\dots,x_{n+1}^m$ can be expressed as words over generators $x_{i}^0,x_{i}^1,\dots,x_{i}^m$, $x_{j}^0,x_{j}^1,\dots,x_{j}^m$. So, we can delete the elements $x_{n+1}^0,x_{n+1}^1,\dots,x_{n+1}^m$ from the generating set of $X_{S,V}(D_1)$.  Therefore $X_{S,V}(D_1)$ is the quotient of $X^{(n)}$ (we change $n+1$ by $n$ since we deleted all the elements $x_{n+1}^0,x_{n+1}^1,\dots,x_{n+1}^m$ from the generating set) by the relations which can be written from the part of the diagram outside of Figure~\ref{r11} and $m+1$ relations which are obtained from equalities (\ref{cord1}), (\ref{cord2}) excluding $\tilde{x}_{n+1}$. From (\ref{cord1}) we have the equality 
\begin{align}\label{cord1new}
\widetilde{x}_j=(\widetilde{x}_{n+1})_{\widetilde{x}_{n+1}^{-1}},
\end{align}
from (\ref{cord2}) we have the equality
 \begin{align}\label{cord2new}
\widetilde{x}_i=(\widetilde{x}_{n+1})^{\widetilde{x}_{j}}=(\widetilde{x}_{n+1})^{(\widetilde{x}_{n+1})_{\widetilde{x}_{n+1}^{-1}}},
\end{align} 
and using the second axiom of biquandles we can exlude $\widetilde{x}_{n+1}$ from  equalities (\ref{cord1new}), (\ref{cord2new}) and obtain the equality  $\tilde{x}_i=\tilde{x}_j$. Therefore $X_{S,V}(D_1)$ is the quotient of $X^{(n)}$  by the relations which can be written from the part of the diagram outside of Figure~\ref{r11} and $m+1$ relations
\begin{equation}\label{i=j}\tilde{x}_i=\tilde{x}_j.
\end{equation}
Comparing this description of $X_{S,V}(D_1)$ with the description of $X_{S,V}(D_2)$, we see that $X_{S,V}(D_1)=X_{S,V}(D_2)$.

The case of the $R_1$-move with the negative crossing, and the case of the  $VR_1$-move are almost the same with small mutations.

\textit{Case 2: $D_2$ is obtained from $D_1$ by $R_2$-move or $VR_2$-move.}
We consider in details only the case of the move $R_2$, the case of the move $VR_2$ is similar. The diagrams $D_1$ and $D_2$ differ only in the small neighborhood, where $D_1$ is as on the left part of Figure~\ref{r2difference}, and $D_2$ is as on the right part of Figure~\ref{r2difference}.
\begin{figure}[hbt!]
\noindent\centering{
\includegraphics[width=40mm]{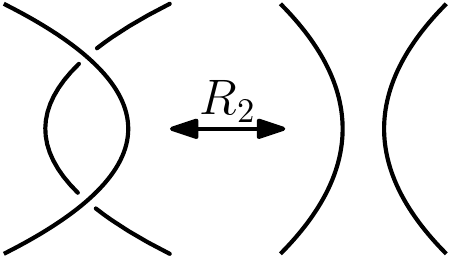}}
\caption{The neighborhood, where $D_1$ and $D_2$ are different: $D_1$ is on the left, $D_2$ is on the right.}
\label{r2difference}
\end{figure}

Suppose that outside of the neighborhood where the $R_2$-move is applied diagrams $D_1$ and $D_2$ have $n$ arcs. Label the arcs of $D_1$ and $D_2$ (which are not labeled yet) inside of the neighborhood where the $R_2$-move is applied by additional labels. Then in the neighbourhood where we apply the $R_2$-move the arcs of the diagram $D_2$ have the labels depicted on Figure~\ref{r22}.
\begin{figure}[hbt!]
\noindent\centering{
\includegraphics[width=20mm]{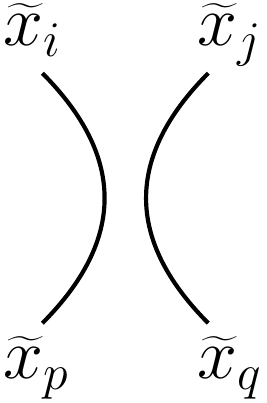}}
\caption{Labels of the arcs in $D_2$.}
\label{r22}
\end{figure}
Therefore $X_{S,V}(D_2)$ is the quotient of $X^{(n)}$ by the relations which can be written from the part of the diagram outside of Figure~\ref{r22} and $2(m+1)$ relations
\begin{align}
\notag \widetilde{x}_i=\widetilde{x}_p,&&\widetilde{x}_j=\widetilde{x}_q
\end{align}
from the part of the diagram on Figure~\ref{r22}.

Let us find $X_{S,V}(D_1)$. In the neighborhood where we apply the $R_2$-move the arcs of the diagram $D_1$ have the labels depicted on Figure~\ref{r21}.
  \begin{figure}[hbt!]
\noindent\centering{
\includegraphics[width=28mm]{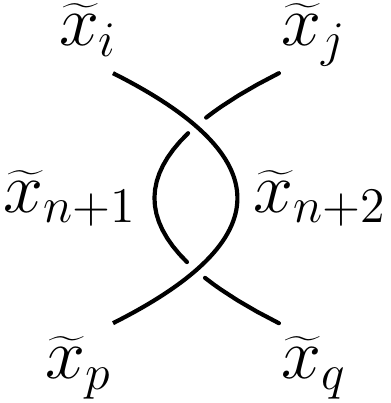}}
\caption{Labels of the arcs in $D_1$.}
\label{r21}
\end{figure}
There are four possibilities for the orientation of the arcs on Figure~\ref{r21}: i) both arcs are oriented from up to down, ii) both arcs are oriented from down to up, iii) the left arc is oriented from up to down, and the right arc is oriented from down to up, and iv) the left arc is oriented from down to up, and the right arc is oriented from up to down. Since case i) is  very similar to case ii), and case iii) is very similar to case iv), we will consider in details only case i) and case iii).

Let both arcs on Figure~\ref{r21} are oriented from the top to the bottom. In this case $X_{S,V}(D_1)$ is the quotient of $X^{(n+2)}$ ($n$ arcs outside of Figure~\ref{r21} and $2$ new arcs inside of Figure~\ref{r21}) by the relations which can be written from the part of the diagram outside of Figure~\ref{r21} and $4(m+1)$ relations
\begin{align}
\label{r2cros1}S^{-1}(\widetilde{x}_i,\widetilde{x}_j)&=(\widetilde{x}_{n+1}, \widetilde{x}_{n+2}),\\
\label{r2cros2}S(\widetilde{x}_{n+1}, \widetilde{x}_{n+2})&=(\widetilde{x}_{p}, \widetilde{x}_q)
\end{align}
which are written from two crossings on Figure~\ref{r21}.

Since for $k=0,1,\dots, m$ the images of the maps $S_k^l,S_k^r$  are words over its arguments in terms of operations of $X$, from equality~(\ref{r2cros1}) follows that the generators $x_{n+1}^0,x_{n+1}^1,\dots,x_{n+1}^m$, $x_{n+2}^0,x_{n+2}^1,\dots,x_{n+2}^m$ can be expressed as words over generators $x_{i}^0,x_{i}^1,\dots,x_{i}^m$, $x_{j}^0,x_{j}^1,\dots,x_{j}^m$. Hence, we can delete the elements $x_{n+1}^0,x_{n+1}^1,\dots,x_{n+1}^m$, $x_{n+2}^0,x_{n+2}^1,\dots,x_{n+2}^m$ from the generating set of $X_{S,V}(D_1)$. Therefore $X_{S,V}(D_1)$ is the quotient of $X^{(n)}$ (here we change $n+2$ by $n$ since we deleted all the elements $x_{n+1}^0,x_{n+1}^1,\dots,x_{n+1}^m$, $x_{n+2}^0,x_{n+2}^1,\dots,x_{n+2}^m$ from the generating set) by the relations which can be written from the part of the diagram outside of Figure~\ref{r21} and $2(m+1)$ relations obtained from equality (\ref{r2cros2}) changing $(\widetilde{x}_{n+1},\widetilde{x}_{n+2})$ by $S^{-1}(\widetilde{x}_i,\widetilde{x}_j)$  (that is what we have from equality (\ref{r2cros1})). This equality clearly has the form
\begin{equation}\label{2i=2j}
(\widetilde{x}_i,\widetilde{x}_j)=(\widetilde{x}_p,\widetilde{x}_q).
\end{equation}
Therefore, we proved that if both arcs on Figure~\ref{r21} are oriented from the top to the bottom, then  $X_{S,V}(D_1)$ is the quotient of $X^{(n)}$  by the relations which can be written from the part of the diagram outside of Figure~\ref{r21} and $2(m+1)$ relations~(\ref{2i=2j}). Comparing this description of $X_{S,V}(D_1)$ with the description of $X_{S,V}(D_2)$, we see that $X_{S,V}(D_1)=X_{S,V}(D_2)$.

Let the left arc on Figure~\ref{r21} is oriented from up to down, and the right arc on Figure~\ref{r21} is oriented from down to up. In this case $X_{S,V}(D_1)$ is the quotient of $X^{(n+2)}$ by the relations which can be written from the part of the diagram outside of Figure~\ref{r21} and $4(m+1)$ relations
\begin{align}
\label{r22cros1}S(\widetilde{x}_{n+1},\widetilde{x}_i)&=(\widetilde{x}_{n+2}, \widetilde{x}_j),\\
\label{r22cros2}S^{-1}(\widetilde{x}_{n+2}^0,\widetilde{x}_{q}^0)&=(\widetilde{x}_{n+1}^0,\widetilde{x}_{p})
\end{align}
which are written from two crossings depicted on Figure~\ref{r21}.  For elements $A,B$ from  $X\times X_1\times X_2\times\dots\times X_m$ denote by $S(A,B)=(B_A,A^B)$, where $B_A,A^B$ are the elements from  $X\times X_1\times \dots\times X_m$.  Since $S$ is a biquandle switch on $X\times X_1\times X_2\times\dots\times X_m$, the maps $B\mapsto B_A$, $B\mapsto B^A$ are invertible and we can speak about $B_{A^{-1}}$, $B^{A^{-1}}$ for $A,B\in X\times X_1\times \dots\times X_m$. In these denotations equalities (\ref{r22cros1}), (\ref{r22cros2}) can be written in the form
\begin{align}
\label{almost1} (\tilde{x}_i)_{\tilde{x}_{n+1}}&=\tilde{x}_{n+2},\\
\label{almost2} (\tilde{x}_{n+1})^{\tilde{x}_i}&=\tilde{x}_j,\\
\label{almost3} (\tilde{x}_p)_{\tilde{x}_{n+1}}&=\tilde{x}_{n+2},\\
\label{almost4} (\tilde{x}_{n+1})^{\tilde{x}_p}&=\tilde{x}_q.
\end{align}
Since for $k=0,1,\dots,m$ the images of the maps $S_k^l,S_k^r$  are words over its arguments in terms of operations of $X$, from equalities~(\ref{almost1}), (\ref{almost2}) follows that
\begin{align}
\notag\tilde{x}_{n+1}=(\tilde{x}_j)^{\tilde{x}_i^{-1}},&&\tilde{x}_{n+2}=(\tilde{x}_i)_{\tilde{x}_{n+1}}=(\tilde{x}_i)_{(\tilde{x}_j)^{\tilde{x}_i^{-1}}},
\end{align}
i.~e. the generators $x_{n+1}^0,x_{n+1}^1,\dots,x_{n+1}^m$, $x_{n+2}^0,x_{n+2}^1, \dots,x_{n+2}^m$ can be expressed as words over generators $x_{i}^0,x_{i}^1,\dots,x_{i}^m$, $x_{j}^0,x_{j}^1,\dots,x_{j}^m$.  So, we can delete the elements $x_{n+1}^0,x_{n+1}^1,\dots,x_{n+1}^m$, $x_{n+2}^0,x_{n+2}^1,\dots,x_{n+2}^m$ from the generating set of $X_{S,V}(D_1)$. Therefore $X_{S,V}(D_1)$ is the quotient of $X^{(n)}$ (we changed $n+2$ by $n$ since we deleted the elements $x_{n+1}^0,x_{n+1}^1,\dots,x_{n+1}^m$, $x_{n+2}^0,x_{n+2}^1,\dots,x_{n+2}^m$ from the generating set) by the relations which can be written from the part of the diagram outside of Figure~\ref{r21} and relations obtained from equalities (\ref{almost3}), (\ref{almost4}) changing $\tilde{x}_{n+1}$ by $(\tilde{x}_j)^{\tilde{x}_i^{-1}}$, and changing $\tilde{x}_{n+2}$ by $(\tilde{x}_i)_{(\tilde{x}_j)^{\tilde{x}_i^{-1}}}$ (that is what we have from equalities (\ref{almost1}), (\ref{almost2})). It is easy to see that these equalities will have the form
 \begin{align}
\label{3i=3j} \tilde{x}_i=\tilde{x}_p,&&\tilde{x}_j=\tilde{x}_q.
 \end{align}
Comparing this description of $X_{S,V}(D_1)$ with the description of $X_{S,V}(D_2)$, we see that $X_{S,V}(D_1)=X_{S,V}(D_2)$ in the case when the left arc on Figure~\ref{r21} is oriented from up to down, and the right arc on Figure~\ref{r21} is oriented from down to up.

\textit{Case 3: $D_2$ is obtained from $D_1$ by $R_3$-move, $VR_3$-move, or $VR_4$-move.} We consider in details only the case of the move $VR_4$, the case of the moves $R_3$ and $VR_3$ are similar. The diagrams $D_1$ and $D_2$ differ only in the small neighborhood, where $D_1$ is as on the left part of Figure~\ref{r3difference}, and $D_2$ is as on the right part of Figure~\ref{r3difference}. 
\begin{figure}[hbt!]
\noindent\centering{
\includegraphics[width=60mm]{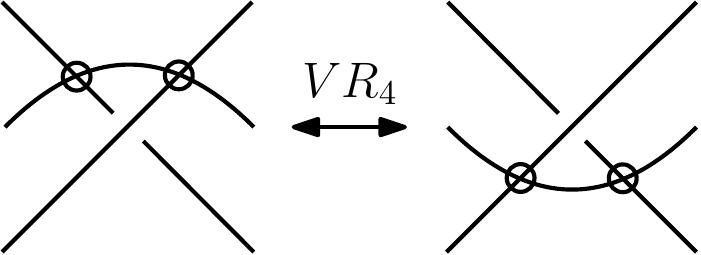}}
\caption{The neighborhood, where $D_1$ and $D_2$ are different: $D_1$ is on the left, $D_2$ is on the right.}
\label{r3difference}
\end{figure}

There are eight possibilities for the orientation of the arcs on Figure~\ref{r3difference}. Turaev (see page~544 in \cite{Tur}) proved that the third Reidemeister move for one orientation of arcs on Figure~\ref{r3difference} can be realized as a sequence of several Reidemeister moves $R_2, VR_2$ and the third Reidemeister move for another fixed orientation of arcs on Figure~\ref{r3difference}. For example, if the top and the bottom arcs on Figure~\ref{r3difference} are oriented from the left to the right, and the middle arcs is oriented from the right to the left, then the Reidemeister move $VR_4$ can be realized as a sequence of moves: 1) two moves $VR_2$, 2) one move $VR_4$, where all arcs are oriented from the left to the right, 3) two moves $VR_2$ (see Figure~\ref{deturim}).
\begin{figure}[hbt!]
\noindent\centering{
\includegraphics[width=140mm]{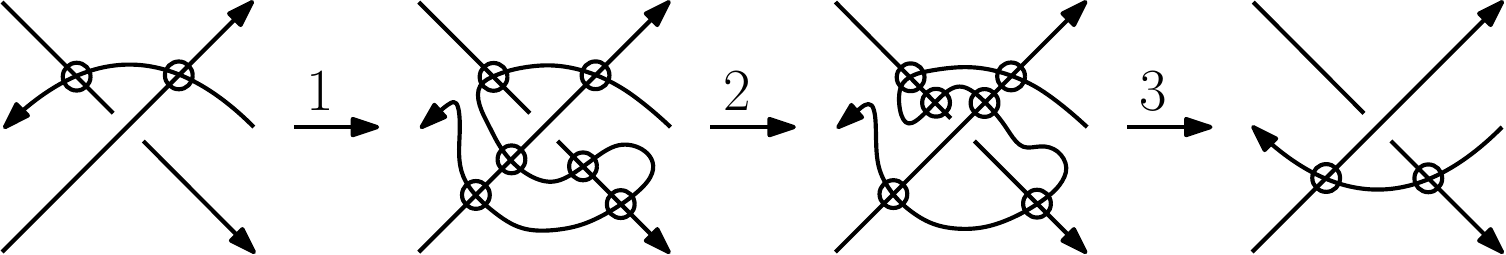}}
\caption{Other orientation of arcs.}
\label{deturim}
\end{figure}
Therefore it is enough to consider only one orientation of the arcs on Figure~\ref{r3difference}. Let us consider the case when all the arcs on Figure~\ref{r3difference} are oriented from the left to the right. 

Suppose that outside of the neighborhood where the $VR_4$-move is applied diagrams $D_1$ and $D_2$ have $n$ arcs. Label the arcs of $D_1$ and $D_2$ (which are not labeled yet) inside of the neighborhood where the $VR_4$-move is applied by additional labels. Then in the neighbourhood where we apply the $VR_4$-move the arcs of the diagram $D_1$ have the labels depicted on Figure~\ref{r31}.
\begin{figure}[hbt!]
\noindent\centering{
\includegraphics[width=55mm]{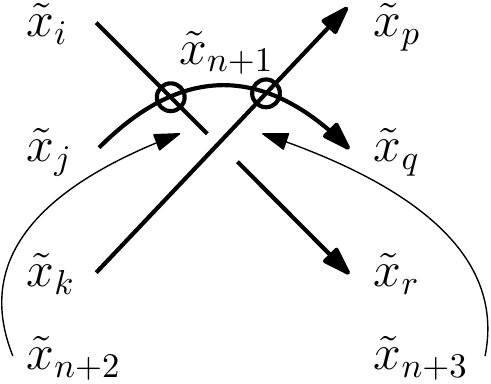}}
\caption{Labels of the arcs in $D_1$.}
\label{r31}
\end{figure}
Therefore $X_{S,V}(D_1)$ is the quotient of $X^{(n+3)}$ ($n$ arcs outside of Figure~\ref{r31} and $3$ new arcs depicted on Figure~\ref{r31}) by the relations which can be written from the part of the diagram outside of Figure~\ref{r31} and $6(m+1)$ relations
\begin{align}
\label{yb11} V(\tilde{x}_j,\tilde{x}_i)&=(\tilde{x}_{n+2},\tilde{x}_{n+1}),\\
\label{yb12} S^{-1}(\tilde{x}_k,\tilde{x}_{n+2})&=(\tilde{x}_{r},\tilde{x}_{n+3}),\\
\label{yb13} V(\tilde{x}_{n+3},\tilde{x}_{n+1})&=(\tilde{x}_{q},\tilde{x}_{p})
\end{align}
from the part of the diagram on Figure~\ref{r31}. Equalities (\ref{yb11}), (\ref{yb12}), (\ref{yb13}) can be rewritten in the following form
\begin{align}
\label{yb21} (id\times V)(\tilde{x}_k,\tilde{x}_j,\tilde{x}_i)&=(\tilde{x}_k,\tilde{x}_{n+2},\tilde{x}_{n+1}),\\
\label{yb22} (S^{-1}\times id)(\tilde{x}_k,\tilde{x}_{n+2},\tilde{x}_{n+1})&=(\tilde{x}_{r},\tilde{x}_{n+3},\tilde{x}_{n+1}),\\
\label{yb23} (id\times V)(\tilde{x}_r,\tilde{x}_{n+3},\tilde{x}_{n+1})&=(\tilde{x}_r,\tilde{x}_{q},\tilde{x}_{p}).
\end{align}
Since for $t=0,1,\dots,m$ the images of the maps $S_t^l,S_t^r$  are words over its arguments in terms of operations of $X$, from equalities~(\ref{yb21}), (\ref{yb22}) one can express the generators from tuples $\tilde{x}_{n+1}, \tilde{x}_{n+2}, \tilde{x}_{n+3}$ as words over generators from tuples $\tilde{x}_{i}, \tilde{x}_{j}, \tilde{x}_{k}$.  So, we can delete the elements from tupels $\tilde{x}_{n+1}, \tilde{x}_{n+2}, \tilde{x}_{n+3}$ from the generating set of $X_{S,V}(D_1)$. Therefore $X_{S,V}(D_1)$ is the quotient of $X^{(n)}$ (we changed $n+3$ by $n$ since we deleted the elements from tupels $\tilde{x}_{n+1}, \tilde{x}_{n+2}, \tilde{x}_{n+3}$ from the generating set) by the relations which can be written from the part of the diagram outside of Figure~\ref{r31} and relations obtained from equalities (\ref{yb21}), (\ref{yb22}), (\ref{yb23}) excluding $\tilde{x}_{n+1}, \tilde{x}_{n+2}, \tilde{x}_{n+3}$. It is easy to see that these equalities will have the form
 \begin{align}
\label{d2in3rdcase}(id\times V)(S^{-1}\times id)(id\times V)(\tilde{x}_k,\tilde{x}_j,\tilde{x}_i)=(\tilde{x}_r,\tilde{x}_q,\tilde{x}_p).
 \end{align}

The algebraic system $X_{S,V}(D_2)$ can be found similarly. Let in the neighbourhood where we apply the $VR_4$-move the arcs of the diagram $D_2$ have the labels depicted on Figure~\ref{r32}.
\begin{figure}[hbt!]
\noindent\centering{
\includegraphics[width=55mm]{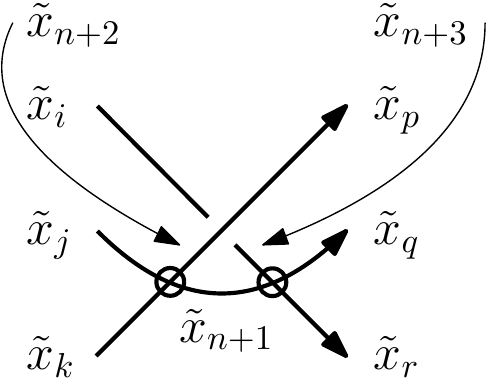}}
\caption{Labels of the arcs in $D_1$.}
\label{r32}
\end{figure}
Then similarly to $X_{S,V}(D_1)$, the algebraic system $X_{S,V}(D_2)$ is the quotient of $X^{(n)}$ by the relations
 \begin{align}
(V\times id)(id\times S^{-1})(V\times id)(\tilde{x}_k,\tilde{x}_j,\tilde{x}_i)=(\tilde{x}_r,\tilde{x}_q,\tilde{x}_p).
 \end{align}
Comparing this equality with equality (\ref{d2in3rdcase}) due to the fact that $(S,V)$ is a virtual multi-switch on $X$ we conclude that $X_{S,V}(D_1)=X_{S,V}(D_2)$.
\end{proof}

Theorem~\ref{ginvariant} gives a powerful tool for constructing invariants for classical and virtual links. In particular, a lot of known invariants can be constructed using Theorem~\ref{ginvariant}. For example, if $X=X_0=F_{\infty}$ is a free group with the free generators $x_1^0,x_2^0,\dots$, and $S,V:X^2\to X^2$ are the maps given by 
\begin{align}
\notag S(x,y)=(y,yxy^{-1}),&&V(x,y)=(y,x),
\end{align}
then $(S,V)$ is a biquandle $1$-switch on $X$. If $L$ is a virtual link presented by a diagram $D$, then the algebraic system $X_{S,V}(D)$ defined in Theorem~\ref{ginvariant}  is the group of a virtual link $L$ introduced by Kauffman in \cite{Kau}. If $D$ represents a classical link, then this group is the classical knot group.

If $X=X_0=FQ_{\infty}$ is the free quandle with the free generators $x_1^0,x_2^0,\dots$, and $S,V:X^2\to X^2$ are the maps given by 
\begin{align}
\notag S(x,y)=(y,x*y),&&V(x,y)=(y,x),
\end{align}
then $(S,V)$ is a biquandle $1$-switch on $X$, and the algebraic system $X_{S,V}(D)$ introduced in Theorem~\ref{ginvariant} is the quandle of a virtual link $L$ (represented by the diagram $D$) introduced by Kauffman in \cite{Kau}. If $D$ represents a classical link $L$, then this quandle is the fundamental quandle of $L$ introduced by Joyce \cite{Joy} and Matveev \cite{Mat}. 

Let $X=F_{\infty}*\mathbb{Z}^{\infty}$ be the free product of the infinitely generated free group $F_{\infty}$ with the free generators $x^0_1,x^0_2,\dots$ and the free abelian group $\mathbb{Z}^{\infty}$ with the canonical generators which are grouped into three groups $x^1_1,x^1_2,\dots$, $x^2_1,x^2_2,\dots$, $x^3_1,x^3_2,\dots$, where $x^3_1=x^3_2=\dots$ Let $X_0=\langle x^0_1,x^0_2,\dots\rangle=F_{\infty}$, $X_1=\langle x^1_1,x^1_2,\dots\rangle=\mathbb{Z}^{\infty}$, $X_2=\langle x^2_1,x^2_2,\dots\rangle=\mathbb{Z}^{\infty}$, $X_3=\langle x^3_1,x^3_2,\dots\rangle=\mathbb{Z}$ and the maps 
$$S,V:X^2\times X_1^2\times X_2^2\times X_3^2\to X^2\times X_1^2\times X_2^2\times X_3^2$$
are given by the formulas
\begin{align}
\notag S(a,b;x,y;p,q;r,s)&=(a b^x a^{-ry}, a^{s};y,x;q,p;s,r),\\ \notag V(a,b;x,y;p,q;r,s)&=(b^{p^{-1}}, a^q;y,x;q,p;s,r),
\end{align}
for $a,b\in X$, $x,y\in X_1$, $p,q\in X_2$, $r,s\in X_3$. Then $(S,V)$ is a virtual $4$-switch on $X$, and the algebraic system $X_{S,V}(D)$ introduced in Theorem~\ref{ginvariant} is the group $G_M(D)$ introduced in \cite{BarMikNes2}.

Let $R=\mathbb{Z}[t^{\pm1},s^{\pm1}]$ be the ring of Laurent polynomials in two variables $t$, $s$. If $X=X^0$ is the free $R$-module with the basis $x^0_1,x^0_2,\dots$, and $S,V:X^2\to X^2$ are the maps given by 
\begin{align}
\notag S(x,y)=(sy,tx+(1-st)y),&&V(x,y)=(y,x)
\end{align}
for $x,y\in X$, then $(S,V)$ is a biquandle $1$-switch on $X$, and the algebraic system $X_{S,V}(D)$ introduced in Theorem~\ref{ginvariant} is the Alexander module of a virtual link represented by $D$. The elementary ideals of this
module determine both the generalized Alexander
polynomial (also known as the Sawollek polynomial) for virtual knots and the classical Alexander polynomial for classical knots.

The examples above show that a lot of known invariants for classical and virtual links can be constructed using Theorem~\ref{ginvariant}. In Section~\ref{newquandlenorm} using Theorem~\ref{ginvariant} we will construct a new quandle invariant for virtual links.

In the examples above we see that a lot of known invariants which are algebraic systems can be constructed as $X_{S,T}$, where $T$ is the twist. This fact reflects the geometric interpretation of the virtual crossing in the virtual link diagram: virtual crossing in the virtual link diagram is a defect of the representation of the virtual knot on the plane. Thus, the following question is natural.
\begin{question}
Let $(S,V)$ be a virtual biquandle $1$-switch on an algebraic system $X$. Is there a biquandle switch $S^{\prime}$ on $X$, such that the invariants $X_{S,V}$ and $X_{S^{\prime}, T}$ are equivalent, where $T$ is the twist on $X$?
\end{question} 
At the moment we do not knot the answer to this question. 

\section{Another representation of $X_{S,V}(D)$}\label{representationshelp}
Multi-switches were introduced in \cite{BarNas2} as a tool for constructing representations of virtual braid groups. Namely, if $(S,V)$ is a virtual multi-switch on an algebraic system $X$, then under additional conditions one can construct a representation 
$$\varphi_{S,V}:VB_n\to {\rm Aut}(X)$$ 
of the virtual braid group $VB_n$ by automorphisms of $X$. In this section we recall the construction of the representation $\varphi_{S,V}$ and show how this representation can be used for finding the set of generators and the set of defining relations of the algebraic system $X_{S,V}(D)$ introduced in Theorem~\ref{ginvariant}.

Let us recall that the virtual braid group $VB_n$ on $n$ strands is the group with $2(n-1)$ generators $\sigma_1,\sigma_2,\dots,\sigma_{n-1}$, $\rho_1,\rho_2,\dots,\rho_{n-1}$ (see Figure~\ref{interpretation})
\begin{figure}[hbt!]
\noindent\centering{
\includegraphics[width=150mm]{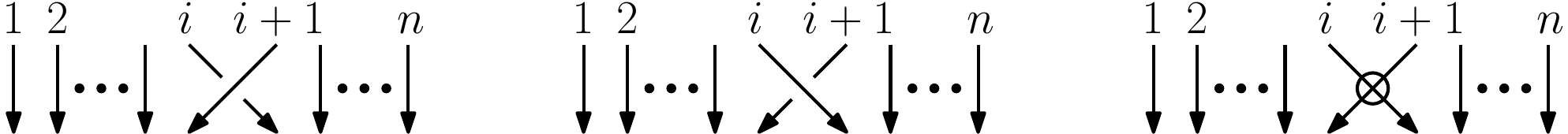}}
\caption{Geometric interpretation of $\sigma_i$ (on the left), $\sigma_i^{-1}$ (in the middle), and $\rho_i$ (on the right).}
\label{interpretation}
\end{figure}
and the following defining relations
\begin{align}
\notag\sigma_i\sigma_{i+1}\sigma_i&=\sigma_{i+1}\sigma_i\sigma_{i+1}&i=1,2,\dots,n-2,\\
\notag\sigma_i\sigma_j&=\sigma_{j}\sigma_i&|i-j|\geq2,\\
\notag\rho_i\rho_{i+1}\rho_i&=\rho_{i+1}\rho_i\rho_{i+1}&i=1,2,\dots,n-2,\\
\notag \rho_i\rho_j&=\rho_{j}\rho_i&|i-j|\geq2,\\
\notag\rho_i^2&=1&i=1,2,\dots,n-1,\\
\notag\rho_{i+1}\sigma_i\rho_{i+1}&=\rho_i\sigma_{i+1}\rho_i&i=1,2,\dots,n-2,\\
\notag\sigma_i\rho_j&=\rho_{j}\sigma_i&|i-j|\geq2.
\end{align}

Let $X$ be an algebraic system, and $X_0,X_1,\dots, X_m$ be subsystems of $X$ such that
\begin{enumerate}
\item   for $i=0,1,\dots,m$ the subsystem $X_i$ is generated by elements $x^{i}_{1},x^{i}_{2},\dots, x^{i}_n$,
\item $\{x^{i}_{1},x^{i}_{2},\dots,x^i_n\}\cap\{x^{j}_{1},x^j_2,\dots,x^j_n\}=\varnothing$ for $i\neq j$,
\item the set of elements $\{x^{i}_j~|~i=0,1,\dots,m, j=1,2,\dots,n\}$ generates $X$.
\end{enumerate}
Let $S=(S_0,S_1,\dots,S_m)$, $V=(V_0,V_1,\dots,V_m)$ be a virtual $(m+1)$-switch on $X$ such that
\begin{align}
\notag S_0=(S_0^l,S_0^r), V_0=(V_0^l,V_0^r)&:X^2 \times X_1^2 \times X_2^2 \times \dots \times X_m^2 \to X^2,\\
\notag S_i=(S_i^l,S_i^r), V_i=(V_i^l,V_i^r)&:X_i^2  \to X_i^2,~\text{for}~i = 1, 2, \dots, m,
\end{align}
and for $i=0,1,\dots,m$ the images of the maps $S_i^l,S_i^r, V_i^l,V_i^r$  are words over its arguments in terms of the operations of $X$. For $j=1,2,\dots,n-1$ denote by $R_j, G_j$ the following maps from $\{x^{i}_j~|~i=0,1,\dots,m, j=1,2,\dots,n\}$ to $X$
\begin{align}
\label{fj} R_j :&\begin{cases}
x^0_{j} \mapsto S_0^l(x^0_{j}, x^0_{j+1}, x^1_{j}, x^1_{j+1}, \dots, x^m_{j}, x^m_{j+1}),\\
x^0_{j+1} \mapsto S_0^r(x^0_{j}, x^0_{j+1}, x^1_{j}, x^1_{j+1}, \dots, x^m_{j}, x^m_{j+1}),\\
x^1_{j} \mapsto S_1^l(x^1_{j}, x^1_{j+1}),\\
x^1_{j+1} \mapsto S_1^r(x^1_{j}, x^1_{j+1}), \\
~~\vdots\\
x^m_{j} \mapsto S_m^l(x^m_{j}, x^m_{j+1}),\\
x^m_{j+1} \mapsto S_m^r(x^m_{j}, x^m_{j+1}),
\end{cases}\end{align}
\begin{align}
\label{gj} G_j :&\begin{cases}
x^0_{j} \mapsto V_0^l(x^0_{j}, x^0_{j+1}, x^1_{j}, x^1_{j+1}, \dots, x^m_{j}, x^m_{j+1}),\\
x^0_{j+1} \mapsto V_0^r(x^0_{j}, x^0_{j+1}, x^1_{j}, x^1_{j+1}, \dots, x^m_{j}, x^m_{j+1}),\\
x^1_{j} \mapsto V_1^l(x^1_{j}, x^1_{j+1}),\\
x^1_{j+1} \mapsto V_1^r(x^1_{j}, x^1_{j+1}), \\
~~\vdots\\
x^m_{j} \mapsto V_m^l(x^m_{j}, x^m_{j+1}),\\
x^m_{j+1} \mapsto V_m^r(x^m_{j}, x^m_{j+1}),
\end{cases}
\end{align}
where all generators which are not explicitly mentioned in $R_j, G_j$ are fixed, i.~e. $R_j(x_k^i)=G_j(x_k^i)=x_k^i$ for $k\neq j,j+1$, $i=0,1,\dots,m$, and assume that $R_j, G_j$ are well defined: since the elements $x^{i}_{1},x^{i}_{2},\dots,x^{i}_{n}$ are not necessary all different, some of these elements can coincide. The fact that $R_j, G_j$ are well defined means that the images of equal elements are equal. For example, if $x_j^i=x_{j+1}^i$, then we assume that
$$S_i^l(x_j^i,x_{j+1}^i)=R_j(x_j^i)=R_j(x_{j+1}^i)=S_i^r(x_j^i,x_{j+1}^i).$$
If for $j=1,2,\dots,n-1$ the maps $R_j$, $G_j$ induce automorphisms of $X$, then we say that $(S,V)$ is \textit{an automorphic virtual multi-switch (shortly, AVMS)} on $X$ with respect to the set of generators $\{x^i_{j}~|~i=0,1,\dots,m,j=1,2,\dots,n\}$. The following theorem is proved in \cite[Theorem~1]{BarNas2}.
\begin{theorem}\label{vautrepr}
Let $(S, V)$ be an AVMS on $X$
with respect to the set of generators $\{x^i_{j}~|~i=0,1,\dots,m,j=1,2,\dots,n\}$. Then the map
$$
\varphi_{S,V} : VB_n \to {\rm Aut}(X)
$$
which is defined on the generators of $VB_n$ as
\begin{align}
\notag\varphi_{S,V}(\sigma_j) = R_j,&&\varphi_{S,V}(\rho_j) = G_j,&&{\text for}~j = 1, 2, \dots, n-1,
\end{align}
where $R_j$, $G_j$ are defined by equalities (\ref{fj}), (\ref{gj}),
is a representation of $VB_n$.
\end{theorem}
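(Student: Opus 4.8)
The plan is to prove that the automorphisms $R_1,\dots,R_{n-1},G_1,\dots,G_{n-1}$ of $X$ satisfy the seven families of defining relations of $VB_n$ listed above; once this is checked, the assignment $\sigma_j\mapsto R_j$, $\rho_j\mapsto G_j$ extends uniquely to a group homomorphism $VB_n\to\Aut(X)$, which is the desired $\varphi_{S,V}$. By the AVMS hypothesis each $R_j$ and $G_j$ is a genuine automorphism of $X$; since two endomorphisms of $X$ coinciding on a generating set are equal, it suffices to verify every relation after evaluation on each generator $x^i_j$ ($i=0,\dots,m$, $j=1,\dots,n$). Here it is convenient to think of the generators as a grid whose $j$-th column is the tuple $\widetilde x_j=(x^0_j,x^1_j,\dots,x^m_j)$; the well-definedness of $R_j,G_j$ when some of the $x^i_j$ coincide is part of the definition of an AVMS, so we may manipulate the generators formally.

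First I would handle the relations that do not involve the (virtual) Yang--Baxter equation. By construction $R_j$ (respectively $G_j$) alters only the generators in columns $j$ and $j+1$ and fixes all the others, so for $|i-j|\ge 2$ the index sets $\{i,i+1\}$ and $\{j,j+1\}$ are disjoint and an immediate case check on each generator gives $R_iR_j=R_jR_i$, $G_iG_j=G_jG_i$ and $R_iG_j=G_jR_i$. For the same reason $G_j^2=\mathrm{id}$: restricted to columns $j,j+1$ the map $G_j$ acts on the pair $(\widetilde x_j,\widetilde x_{j+1})$ exactly as the involutive $(m+1)$-switch $V$ acts on $X^2\times X_1^2\times\dots\times X_m^2$, so $G_j^2$ restricts to $V^2=\mathrm{id}$ there and is the identity on all other columns.

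The heart of the argument will be the three remaining families: the braid relations $\sigma_i\sigma_{i+1}\sigma_i=\sigma_{i+1}\sigma_i\sigma_{i+1}$, $\rho_i\rho_{i+1}\rho_i=\rho_{i+1}\rho_i\rho_{i+1}$ and the mixed relation $\rho_{i+1}\sigma_i\rho_{i+1}=\rho_i\sigma_{i+1}\rho_i$. For these I would restrict attention to columns $i,i+1,i+2$ (the only ones moved) and identify the triple $(\widetilde x_i,\widetilde x_{i+1},\widetilde x_{i+2})$ with an element of $(X\times X_1\times\dots\times X_m)^3$. The key point is that, since the maps $S^l_k,S^r_k,V^l_k,V^r_k$ are words in their arguments in terms of the operations of $X$, pushing them through an endomorphism is literally composition slot by slot; consequently, under the identification $\sigma_i\leftrightarrow S\times\mathrm{id}$, $\sigma_{i+1}\leftrightarrow\mathrm{id}\times S$, $\rho_i\leftrightarrow V\times\mathrm{id}$, $\rho_{i+1}\leftrightarrow\mathrm{id}\times V$ (maps acting on the indicated slots of the triple), evaluating a composite of the corresponding $R$'s and $G$'s on the generators of columns $i,i+1,i+2$ returns precisely the value of the same ordered composite of copies of $S$ and $V$ on $(\widetilde x_i,\widetilde x_{i+1},\widetilde x_{i+2})$. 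Granting this dictionary, $R_iR_{i+1}R_i$ and $R_{i+1}R_iR_{i+1}$ correspond to the two sides $(S\times\mathrm{id})(\mathrm{id}\times S)(S\times\mathrm{id})$ and $(\mathrm{id}\times S)(S\times\mathrm{id})(\mathrm{id}\times S)$ of the Yang--Baxter equation~(\ref{YB}) for the switch $S$ on $X\times X_1\times\dots\times X_m$, hence agree; the braid relation for the $G_j$ follows the same way from~(\ref{YB}) applied to the switch $V$; and $G_{i+1}R_iG_{i+1}$ and $G_iR_{i+1}G_i$ correspond to the two sides $(\mathrm{id}\times V)(S\times\mathrm{id})(\mathrm{id}\times V)$ and $(V\times\mathrm{id})(\mathrm{id}\times S)(V\times\mathrm{id})$ of the virtual Yang--Baxter equation~(\ref{VYB}), hence agree because $(S,V)$ is a virtual $(m+1)$-switch.

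The main obstacle --- and the only step requiring genuine care --- is establishing this last dictionary: one must track, for every generator $x^k_j$ with $j\in\{i,i+1,i+2\}$ and at each of the three stages of the composite, exactly which variable is replaced by which word, and make sure the chosen convention for composing automorphisms matches the left-to-right reading of the (virtual) Yang--Baxter equations. Everything else --- the disjoint-support relations, $G_j^2=\mathrm{id}$, and the reduction to checking on generators --- is routine bookkeeping.
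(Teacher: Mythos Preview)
The paper does not actually prove this theorem: it is quoted from \cite[Theorem~1]{BarNas2}, so there is no in-paper proof to compare against. Your outline is the standard (and correct) argument, and it is precisely the route taken in the cited reference: one checks the seven families of relations of $VB_n$ on generators, with the far-commutativity relations and $G_j^2=\mathrm{id}$ being immediate from disjoint supports and $V^2=\mathrm{id}$, and the three ``braid-type'' relations reducing, via the dictionary you describe, to the Yang--Baxter equation for $S$, for $V$, and to equation~(\ref{VYB}) respectively. The only caveat worth flagging is the one you already identify: the paper's convention (stated just after Theorem~\ref{vautrepr}) is $\varphi_{S,V}(\beta_1\cdots\beta_k)=\varphi_{S,V}(\beta_k)\circ\cdots\circ\varphi_{S,V}(\beta_1)$, so when you translate a word in the $\sigma$'s and $\rho$'s into a composite of $S\times\mathrm{id}$, $\mathrm{id}\times S$, $V\times\mathrm{id}$, $\mathrm{id}\times V$ you must reverse the order; with that bookkeeping in place your dictionary is exact and the argument goes through.
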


The map $\varphi_{S,V}$ from Theorem~\ref{vautrepr} is given explicitly on the generators $\sigma_1,\sigma_2,\dots,\sigma_{n-1}$, $\rho_1,\rho_2,\dots,\rho_{n-1}$ of the virtual braid group $VB_n$ by formulas (\ref{fj}), (\ref{gj}). Let $\beta$ be an arbitrary braid from $VB_n$, and $x$ be an arbitrary element from $X$. In order to find the value $\varphi_{S,V}(\beta)(x)$ express the braid $\beta$ in terms of the generators $\sigma_1,\sigma_2,\dots,\sigma_{n-1}$, $\rho_1,\rho_2,\dots,\rho_{n-1}$, i.~e. write $\beta$ in the form $\beta=\beta_1\beta_2\dots\beta_{k}$, where
$$\beta_i\in\{\sigma_{1},\sigma_2\dots,\sigma_{n-1},\sigma_1^{-1},\sigma_2^{-1},\dots,\sigma_{n-1}^{-1},\rho_1,\rho_2,\dots,\rho_{n-1}\}$$
for $i=1,2,\dots,k$, and then reading $\beta$ from the left to the right calculte the images
$$\begin{CD}
x @>\varphi_{S,V}(\beta_1)>> x_1 @>\varphi_{S,V}(\beta_2)>> x_2@>\varphi_{S,V}(\beta_3)>> \dots@>\varphi_{S,V}(\beta_k)>> x_k.
\end{CD}
$$
The last calculated value $x_k$ is the image of $\varphi_{S,V}(\beta)(x)$. This agreement means that
$$\varphi_{S,V}(\beta_1\beta_2\dots\beta_k)(x)=\varphi_{S,V}(\beta_k)\varphi_{S,V}(\beta_{k-1})\dots\varphi_{S,V}(\beta_1)(x).$$

 Let $X$ be an algebraic system, and $X_0,X_1,\dots, X_m$ be subsystems of $X$ which satisfy conditions (1)-(4) from the beginning of Section~\ref{secinv}. Let $(S,V)$ be a virtual $(m+1)$-switch on $X$ such that
\begin{align}
\notag S_0=(S_0^l,S_0^r), V_0=(V_0^l,V_0^r)&:X^2 \times X_1^2 \times X_2^2 \times \dots \times X_m^2 \to X^2,\\
\notag S_i=(S_i^l,S_i^r), V_i=(V_i^l,V_i^r)&:X_i^2  \to X_i^2,~\text{for}~i = 1, 2, \dots, m,
\end{align}
 and for $i=0,1,\dots,m$ the images of maps $S_i^l,S_i^r, V_i^l, V_i^r$  are words over its arguments in terms of operations of $X$. From this fact follows, in particular that $(S,V)$ induces a virtual $(m+1)$-switch $(S^{(n)}, V^{(n)})$ on $X^{(n)}$ for all $n$. Suppose that for all $n=2,3,\dots$ the virtual $(m+1)$-switch $(S^{(n)}, V^{(n)})$ is AVMS on $X^{(n)}$ with respect to the set of generators $\{x^i_{j}~|~i=0,1,\dots,m,j=1,2,\dots,n\}$. According to Theorem~\ref{vautrepr} for $n=2,3,\dots$ we have representations
$$\varphi_{S^{(n)}, V^{(n)}}:VB_n\to {\rm Aut}\left(X^{(n)}\right).$$
Since $\varphi_{S^{(n)}, V^{(n)}}$ and $\varphi_{S^{(n+1)}, V^{(n+1)}}$ are obtained from the same virtual multi-switch $(S,V)$ restricted to different sets, we have
$$\varphi_{S^{(n+1)}, V^{(n+1)}}|_{VB_n}=\varphi_{S^{(n)}, V^{(n)}},$$
so, automorphisms $\varphi_{S^{(n)}, V^{(n)}}$ ``agree'' with each other for different $n$. Denote by $VB_{\infty}=\bigcup_n VB_n$, and by $\varphi_{S,V}:VB_{\infty}\to {\rm Aut}(X)$ the homomorphism which is equal to $\varphi_{S^{(n)}, V^{(n)}}$ on $VB_n$ (this homomorphism is well defined since $\varphi_{S^{(n)}, V^{(n)}}$ agree with each other). Now we can write $\varphi_{S,V}:VB_n\to {\rm Aut}(X^{(n)})$ meaning the restriction of $\varphi_{S,V}$ to $VB_n$.

If $(S,V)$ is a virtual $(m+1)$-switch on $X$ such that, $S,V$ are biquandle switches on $X\times X_1\times X_2\times\dots\times X_m$, then by Theorem~\ref{ginvariant} one can construct the invariant $X_{S,V}(D)$ for virtual links. The following theorem gives a method how to calculate $X_{S,V}(D)$  using the representation $\varphi_{S,V}:VB_{\infty}\to {\rm Aut}(X)$.
\begin{theorem}\label{usingrepr} Let $\beta \in VB_n$ be a virtual braid, and $D=\hat{\beta}$ be the closure of $\beta$. Then $X_{S,V}(D)$ is equal to the quotient of $X^{(n)}$ by the
relations
$$
\varphi_{S,V}(\beta)(x^i_{j}) = x^i_{j}
$$
for $i = 0,1, \dots, m$, $j = 1,2, \dots, n$.
\end{theorem}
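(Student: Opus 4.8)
The plan is to draw $D=\hat\beta$ in braid form and read off a presentation of $X_{S,V}(D)$ one horizontal slice at a time, then collapse it using the representation $\varphi_{S,V}$. First I would fix a word $\beta=\beta_1\beta_2\cdots\beta_k$ with each $\beta_t\in\{\sigma_1^{\pm1},\dots,\sigma_{n-1}^{\pm1},\rho_1,\dots,\rho_{n-1}\}$ and draw it vertically with the $n$ closure arcs attached on the right, free of crossings. Slice the box by horizontal lines just below the top and just below each $\beta_t$; each line meets $n$ arcs, and the $n$ arcs on the top line are identified with the $n$ on the bottom line through the closure arcs, so I label both families by the same tuples $\widetilde{x}_j=(x^0_j,\dots,x^m_j)$ ($j=1,\dots,n$) and I label the $n$ arcs just below $\beta_t$ by fresh generator tuples $\widetilde{a}^{(t)}_1,\dots,\widetilde{a}^{(t)}_n$, so that $\widetilde{a}^{(0)}_j=\widetilde{a}^{(k)}_j=\widetilde{x}_j$. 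With this indexing every defining relation of $X_{S,V}(D)$ is visible: in a column untouched by $\beta_t$ the arc simply continues, so $\widetilde{a}^{(t)}_q=\widetilde{a}^{(t-1)}_q$, and the $2(m+1)$ crossing relations at $\beta_t$ relate the two touched tuples at heights $t-1$ and $t$. Because at each height the $n$ tuples are independent generators, this last relation reads, for the positions $j$ in the support of $\beta_t$, exactly $\widetilde{a}^{(t)}_j=\varphi_{S,V}(\beta_t)^{\varepsilon}(\widetilde{a}^{(t-1)}_j)$, where $\varepsilon=\pm1$ is the sign fixed by matching the labelling rule of Figure~\ref{biqrel} with the formulas~(\ref{fj}),~(\ref{gj}) defining $R_p,G_p$ (for a negative crossing one uses that $R_p$ is invertible, since $(S,V)$ is an AVMS); absorbing the untouched positions, the passage from height $t-1$ to height $t$ is uniformly $\widetilde{a}^{(t)}_j=\varphi_{S,V}(\beta_t)^{\varepsilon}(\widetilde{a}^{(t-1)}_j)$ for all $j$.

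Next I would use these relations to eliminate $\widetilde{a}^{(1)},\dots,\widetilde{a}^{(k)}$ in turn — legitimate because the images of $S_i^l,S_i^r,V_i^l,V_i^r$ are words over their arguments — retaining only $\widetilde{a}^{(0)}=\widetilde{x}$. This exhibits $X_{S,V}(D)$ as a quotient of $X^{(n)}$, with $\widetilde{a}^{(k)}_j$ rewritten as $\Phi(\widetilde{x}_j)$ for a single automorphism $\Phi$ of $X^{(n)}$; the crux is the identity $\Phi=\varphi_{S,V}(\beta)^{\varepsilon}$. Since each inter-level map $\varphi_{S,V}(\beta_t)^{\varepsilon}$ is a homomorphism that modifies only the two generators in the support of $\beta_t$, one checks by induction on $t$ that $\widetilde{a}^{(t)}_j=\varphi_{S,V}(\beta_1)^{\varepsilon}\!\circ\cdots\circ\varphi_{S,V}(\beta_t)^{\varepsilon}(\widetilde{x}_j)$, and by Theorem~\ref{vautrepr} together with the stated convention for $\varphi_{S,V}$ on products this composite collapses to $\varphi_{S,V}(\beta)^{\varepsilon}$ (columns with no crossing contribute nothing, matching the fact that $\varphi_{S,V}(\beta)$ fixes the corresponding generators).

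Finally, after the elimination the only surviving relations are the closure relations $\widetilde{a}^{(k)}_j=\widetilde{a}^{(0)}_j$, i.e.\ $\Phi(\widetilde{x}_j)=\widetilde{x}_j$, which spell out as $\varphi_{S,V}(\beta)^{\varepsilon}(x^i_j)=x^i_j$ for all $i,j$; this is the asserted presentation when $\varepsilon=+1$, and when $\varepsilon=-1$ it is equivalent to it, because in the quotient $\varphi_{S,V}(\beta)$ descends to the identity automorphism, so the congruences on $X^{(n)}$ generated by $\{\varphi_{S,V}(\beta)(x^i_j)=x^i_j\}$ and by $\{\varphi_{S,V}(\beta)^{-1}(x^i_j)=x^i_j\}$ coincide. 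The hard part, I expect, is the middle step: pinning down $\varepsilon$ (and, if the conventions dictate, the order in which the $\beta_t$ enter) against the figure conventions, and verifying that the iterated substitution genuinely telescopes to $\varphi_{S,V}(\beta)^{\varepsilon}$ rather than to some word that only happens to generate the same congruence. Organising the elimination as an induction on $k$ — peeling off $\beta_k$, relating the slice diagram of $\beta$ to that of $\beta_1\cdots\beta_{k-1}$, and invoking the homomorphism property of $\varphi_{S,V}$ at each stage — is the way I would keep this step honest.
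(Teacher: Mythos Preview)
Your proposal is essentially the paper's own proof: write $\beta=\beta_1\cdots\beta_k$, label the $n$ arcs at each horizontal level between consecutive $\beta_t$'s, read each crossing as the relation that the level-$t$ labels are obtained from the level-$(t-1)$ labels by the word map corresponding to $\varphi_{S,V}(\beta_t)$, eliminate the intermediate levels by iterated substitution, and use the closure identification to reduce everything to $\varphi_{S,V}(\beta)(x^i_j)=x^i_j$ over $X^{(n)}$. The paper carries this out with exactly the same bookkeeping (its labels $\tilde{x}_{(i-1)n+r}$ are your $\tilde{a}^{(i-1)}_r$), and your added $\varepsilon$-hedge and the remark that the congruences generated by $\varphi_{S,V}(\beta)$ and $\varphi_{S,V}(\beta)^{-1}$ coincide are the only embellishments beyond what the paper writes.
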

\begin{proof} The braid $\beta$ can be written in the form $\beta=\beta_1\beta_2\dots \beta_k$ where,
$$\beta_i\in\{\sigma_{1},\sigma_2\dots,\sigma_{n-1},\sigma_1^{-1},\sigma_2^{-1},\dots,\sigma_{n-1}^{-1},\rho_1,\rho_2,\dots,\rho_{n-1}\}$$
for $i=1,2,\dots,k$. Label by 
\begin{align}
\notag \tilde{x}_{1}=(x_{1}^0,x_1^1,\dots,x_{1}^m),&& \tilde{x}_{2}=(x_{2}^0,x_2^1,\dots,x_{2}^m),&& \dots&& \tilde{x}_{n}=(x_{n}^0,x_n^1,\dots,x_{n}^m)
\end{align}
 the arcs of the diagram $\hat{\beta}$ which contain the upper points of $\beta_1$ (see Figure \ref{labels1} for the case $\beta_1=\sigma_j$).
 \begin{figure}[hbt!]
\noindent\centering{
\includegraphics[width=75mm]{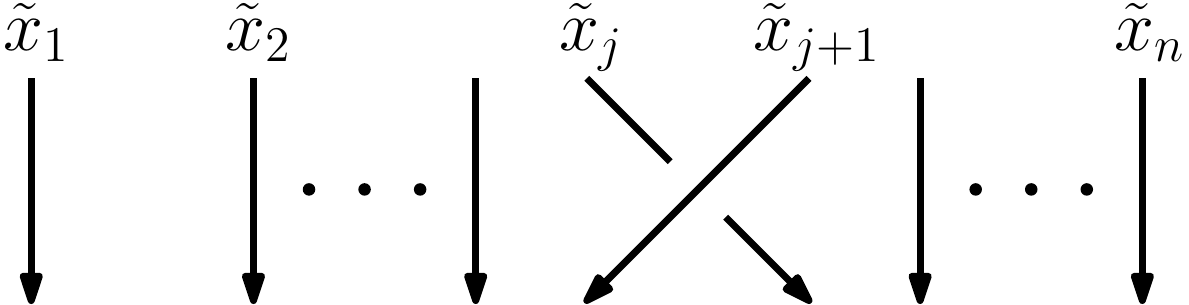}}
\caption{Labels of arcs in $\beta_1=\sigma_{j}$.}
\label{labels1}
\end{figure}\\
In a similar way for $i=2,3,\dots,k$ label by
\begin{align}
\notag \tilde{x}_{(i-1)n+1}&=\left(x_{(i-1)i+1}^0,x_{(i-1)i+1}^1,\dots,x_{(i-1)n+1}^m\right)\\
\notag \tilde{x}_{(i-1)n+2}&=\left(x_{(i-1)i+2}^0,x_{(i-1)i+2}^1,\dots,x_{(i-1)n+2}^m\right)\\
\notag &\dots\\
\notag \tilde{x}_{in}&=\left(x_{in}^0,x_{in}^1,\dots,x_{in}^m\right)
\end{align}
the arcs of the diagram $\hat{\beta}$ which contain the upper points of $\beta_i$. Since the top points of $\beta_{i+1}$ coincide with the bottom points of $\beta_i$, some arcs are labeled several times (see Figure~\ref{labels} for the case $\beta_i=\sigma_j$, where~$\tilde{x}_{(i-1)n+1}=\tilde{x}_{in+1}$).
\begin{figure}[hbt!]
\noindent\centering{
\includegraphics[width=100mm]{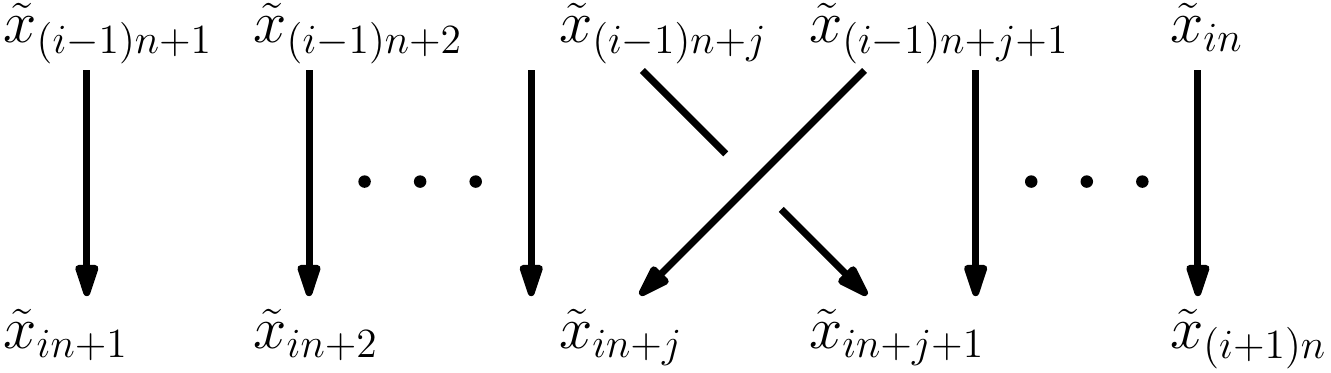}}
\caption{Labels of arcs in $\beta_i=\sigma_{j}$.}
\label{labels}
\end{figure}

Due to Remark~\ref{remark} from Section~\ref{secinv} the algebraic system $X_{S,V}(\hat{\beta})$ is isomorphic to the quotient of $X^{(nk)}$ (for $i=1,2,\dots,k$ each $\beta_i$ gives $n$ tuples of generators) by the $k$ families of relations which can be written from the crossing in $\beta_1,\beta_2,\dots,\beta_k$. The relations which are obtained from $\beta_1$ depending on $\beta_1$ have the following form:\\
if $\beta_1=\sigma_j$, then the relations are
\begin{align}
\notag \tilde{x}_{n+r}&=\tilde{x}_{r}&&\text{for}~r\neq j,j+1,\\
\label{perv} (\tilde{x}_{n+j},\tilde{x}_{n+j+1})&=S(\tilde{x}_{j},\tilde{x}_{j+1}),
\end{align}
\noindent if $\beta_1=\sigma_j^{-1}$, then the relations are 
\begin{align}
\notag \tilde{x}_{n+r}&=\tilde{x}_{r}&&\text{for}~r\neq j,j+1,\\
\label{vtor} (\tilde{x}_{n+j},\tilde{x}_{n+j+1})&=S^{-1}(\tilde{x}_{j},\tilde{x}_{j+1}),
\end{align}
\noindent if $\beta_1=\rho_j$, then the relations are
\begin{align}
\notag \tilde{x}_{n+r}&=\tilde{x}_{r}&&\text{for}~r\neq j,j+1,\\
\label{tret} (\tilde{x}_{n+j},\tilde{x}_{n+j+1})&=V(\tilde{x}_{j},\tilde{x}_{j+1}),
\end{align}
(see Figure~\ref{labels} for $i=1$ and the definition of $X_{S,V}(D)$).  From formulas (\ref{fj}), (\ref{gj}) and Theorem~\ref{vautrepr} (where $\varphi_{S,V}$ is defined) we see that relations (\ref{perv}), (\ref{vtor}), (\ref{tret}) can be rewritten in the unique way
\begin{align}
\label{1stbraid}\tilde{x}_{n+r}=\varphi_{S,V}(\beta_1)(\tilde{x}_r)&& r=1,2,\dots,n,
\end{align}
where $\varphi_{S,V}(\beta_1)(\tilde{x}_r)=(\varphi_{S,V}(\beta_1)(x^0_r),\varphi_{S,V}(\beta_1)(x^1_r),\dots,\varphi_{S,V}(\beta_1)(x^m_r))$.  In a similar way the relations which are obtained from $\beta_2$ depending on $\beta_2$ have the following form:\\
if $\beta_2=\sigma_j$, then the relations are
\begin{align}
\notag \tilde{x}_{2n+r}&=\tilde{x}_{n+r}&&\text{for}~r\neq j,j+1,\\
\label{2perv} (\tilde{x}_{2n+j},\tilde{x}_{2n+j+1})&=S(\tilde{x}_{n+j},\tilde{x}_{n+j+1}),
\end{align}
\noindent if $\beta_2=\sigma_j^{-1}$, then the relations are
\begin{align}
\notag\tilde{x}_{2n+r}&=\tilde{x}_{n+r}&&\text{for}~r\neq j,j+1,\\
\label{2vtor} (\tilde{x}_{2n+j},\tilde{x}_{2n+j+1})&=S^{-1}(\tilde{x}_{n+j},\tilde{x}_{n+j+1}),
\end{align} 
\noindent if $\beta_2=\rho_j$, then the relations are
\begin{align}
\notag\tilde{x}_{2n+r}&=\tilde{x}_{n+r}&&\text{for}~r\neq j,j+1,\\
\label{2tret} (\tilde{x}_{2n+j},\tilde{x}_{2n+j+1})&=V(\tilde{x}_{n+j},\tilde{x}_{n+j+1}).
\end{align}
From formulas (\ref{fj}), (\ref{gj}) and Theorem~\ref{vautrepr}  we see that relations (\ref{2perv}), (\ref{2vtor}), (\ref{2tret}) can be rewritten in the unique way
\begin{align}
\label{2ndbraid}\tilde{x}_{2n+r}=\varphi_{S,V}(\beta_2)(\tilde{x}_{n+r})&& r=1,2,\dots,n.
\end{align}
From relations (\ref{1stbraid}) and (\ref{2ndbraid}) it follows that 
\begin{align}
\notag\tilde{x}_{2n+r}=\varphi_{S,V}(\beta_1\beta_2)(\tilde{x}_{r})&& r=1,2,\dots,n.
\end{align}
Using the same argumentation for $i=1,2,\dots,k-1$ the relations which are obtained from $\beta_i$ can be written in a unique way
\begin{align}
\label{indbraid}\tilde{x}_{in+r}=\varphi_{S,V}(\beta_1\beta_2\dots\beta_i)(\tilde{x}_r)&& r=1,2,\dots,n.
\end{align}
Since in  $\hat{\beta}$ we identify the top points of $\beta$ (which are the top points of $\beta_1$) with the bottom points of $\beta$ (which are the bottom points of $\beta_k$), the relations which are obtained from $\beta_k$ can be written in a unique way
\begin{align}
\label{kndbraid}\tilde{x}_{r}=\varphi_{S,V}(\beta_1\beta_2\dots\beta_k)(\tilde{x}_r)=\varphi_{S,V}(\beta)(\tilde{x}_r)&& r=1,2,\dots,n.
\end{align}
Therefore $X_{S,V}(\hat{\beta})$ is the quotient of $X^{(nk)}$ by the relations (\ref{indbraid}), (\ref{kndbraid}). Using relations (\ref{indbraid}) we can delete the elements from tuples  $\tilde{x}_{in+r}$ for $i=1,2,\dots,{k-1}$ ($n(k-1)$ tuples of generators) from the generating set of $X_{S,V}(D)$. Therefore the algebraic system $X_{S,V}(\hat{\beta})$ is the quotient of $X^{(n)}$ ($nk-n(k-1)=n$) by relations $(\ref{kndbraid})$.
\end{proof}

\section{New quandle invariant for virtual links}\label{newquandlenorm}
In this section using Theorem~\ref{ginvariant} we construct a new quandle invariant for virtual links. Definition of a quandle is given in Section~\ref{multnewmult}.  The information about free products of quandles can be found, for example, in \cite{BarNas}. In \cite[Corollary~2]{BarNas2} we found the following virtual $2$-switch on quandles.
\begin{prop}\label{nweq}Let $Q$ be a quandle, $X_1$ be a trivial quandle, and $X=Q*X_1$ be the free product of $Q$ and $X_1$. Let $S,V:X^2\times X_1^{2}\to X^2\times X_1^{2}$ be the maps defined by
\begin{align}
\notag S(a,b;x,y)=(b,a*b;y,x),&&V(a,b;x,y)=(b*^{-1}x,a*y;y,x)
\end{align}
for $a,b\in X$, $x,y\in X_1$. Then $(S,V)$ is a virtual $2$-switch on $X$. Moreover, the maps $S, V$ are biquandle switches on $X\times X_1$.
\end{prop}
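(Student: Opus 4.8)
The plan is to verify the four requirements of the statement in turn: that $S$ is a biquandle $2$-switch on $X\times X_1$, that $V$ is an involutive $2$-switch on $X\times X_1$, that the mixed relation (\ref{VYB}) holds for the pair $(S,V)$, and that $V$ is in addition a biquandle switch. The claim about $S$ is the easy one. Writing $S=(S_0,S_1)$, the $X$-block $S_0(a,b;x,y)=(b,a*b)$ does not involve $x,y$ and is precisely the quandle switch $S_Q$ of Example~\ref{quasw} applied to the quandle $X=Q*X_1$, while the $X_1$-block $S_1(x,y)=(y,x)$ is the twist $T$ of Example~\ref{twist}. Thus $S=S_Q\times T$ is a direct product of two biquandle switches, and all the conditions defining a biquandle switch (the Yang--Baxter equation, invertibility of the maps $S^l_a$, $S^r_a$, and the two identities of condition (2)) are preserved under direct products because each is imposed coordinate by coordinate; hence $S$ is a biquandle $2$-switch on $X\times X_1$.

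Everything that remains concerns $V$, and it all hinges on one observation. Since $X_1$ is a trivial quandle, for $x,y\in X_1$ and an arbitrary $a\in X$ the quandle axiom (3) gives $(a*y)*x=(a*x)*(y*x)=(a*x)*y$, so the inner automorphisms $I_x$, $I_y$ of $X$ commute, and consequently so do $I_x^{\pm1}$ and $I_y^{\pm1}$ in every combination. Combining this with $(c*^{-1}x)*x=c$ (axiom (2)) one checks $V^2=\mathrm{id}$ by direct substitution: applying $V$ twice to $(a,b;x,y)$ returns $\big((a*y)*^{-1}y,\,(b*^{-1}x)*x;\,x,y\big)=(a,b;x,y)$. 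For the Yang--Baxter equation for $V$ and for the mixed relation (\ref{VYB}) I would expand both sides on a generic triple $((a,x),(b,y),(c,z))\in(X\times X_1)^3$, using $V((p,u),(q,v))=((q*^{-1}u,v),(p*v,u))$ and $S((p,u),(q,v))=((q,v),(p*q,u))$. On the $X_1$-coordinates both $S$ and $V$ act as the twist, so those coordinates agree on the two sides without any computation; on the $X$-coordinates the comparison reduces termwise to identities of the shapes $(c*^{-1}x)*^{-1}y=(c*^{-1}y)*^{-1}x$, $(b*^{-1}x)*z=(b*z)*^{-1}x$, $(a*y)*z=(a*z)*y$, and, only in the case of (\ref{VYB}), the additional identity $(a*(c*^{-1}y))*y=(a*y)*\big((c*^{-1}y)*y\big)=(a*y)*c$. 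The first three are exactly the commuting of $I_x^{\pm1}$ with $I_y^{\pm1}$, and the last is axiom (3) followed by axiom (2). This proves that $(S,V)$ is a virtual $2$-switch on $X$.

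It remains to see that $V$ is itself a biquandle switch on $X\times X_1$. Non-degeneracy is transparent: writing $A=(a,x)$ and $B=(b,y)$, the map $V^l_A$ sends $(b,y)\mapsto(b*^{-1}x,y)$, with inverse $(b,y)\mapsto(b*x,y)$, and $V^r_B$ sends $(a,x)\mapsto(a*y,x)$, with inverse $(a,x)\mapsto(a*^{-1}y,x)$. For condition (2), a short computation with the inverse up/down operations gives, for $A=(a,x)$, that $A^{A^{-1}}=(a*^{-1}x,x)$ and $A_{A^{-1}}=(a*x,x)$, whence $A_{A^{A^{-1}}}=(a*^{-1}x,x)=A^{A^{-1}}$ and $A^{A_{A^{-1}}}=(a*x,x)=A_{A^{-1}}$, so both identities hold. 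I do not expect any single step to be a genuine obstacle: the only content beyond bookkeeping is the commutativity of the inner automorphisms $I_x$ for $x\in X_1$, forced by the triviality of $X_1$, and once that is isolated each remaining identity is a one-line consequence of the quandle axioms; the only real effort is in carefully expanding the three-fold compositions appearing in the Yang--Baxter equation for $V$ and in (\ref{VYB}).
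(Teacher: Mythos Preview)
Your verification is correct: the decomposition $S=S_Q\times T$ handles $S$ immediately, and the key observation that triviality of $X_1$ forces the inner automorphisms $I_x$ $(x\in X_1)$ to commute with one another reduces every coordinate comparison in the Yang--Baxter equation for $V$ and in the mixed relation (\ref{VYB}) to either this commutativity or a single application of quandle axiom~(3). Your check of the biquandle conditions for $V$ is likewise fine.

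There is no proof in the paper to compare against: the proposition is quoted from \cite[Corollary~2]{BarNas2} and stated without argument here. Your direct elementwise verification is therefore more than the paper itself supplies, and is a perfectly good self-contained proof. One small remark worth making explicit for the reader is that $y*x=y$ continues to hold in $X=Q*X_1$ (not just in $X_1$), since the free product embeds each factor as a subquandle; you use this implicitly when invoking axiom~(3) inside $X$.
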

Using the virtual $2$-switch $(S,V)$ introduced in Proposition~\ref{nweq}  we are going to construct a new quandle invariant $\widetilde{Q}(L)$ for virtual links which generalizes the quandle of Manturov \cite{Man2}  and the quandle of Kauffman \cite{Kau}.

Let $X_0=FQ_{\infty}$ be the free quandle on the generators $x_1,x_2,\dots$, $X_1=T_{\infty}$ be the trivial quandle on the elements $y_1,y_2,\dots$, and $X=X_0*X_1$. If for $j=1,2,\dots$ we denote by $x_j^0=x_j$, $x_j^1=y_j$, then it is clear that
\begin{enumerate}
\item   for $i=0,1$ the subsystem $X_i$ is generated by elements $x^{i}_{1},x^{i}_{2},\dots$,
\item $\{x^{0}_{1},x^{0}_{2},\dots\}\cap\{x^{1}_{1},x^1_2,\dots\}=\varnothing$, 
\item the set of elements $\{x^{i}_j~|~i=0,1, j=0,1,\dots\}$ generates $X$,
\item for every permutation $\alpha$ of $\mathbb{N}$ with a finite support the map $x^i_j\mapsto x^i_{\alpha(j)}$ for $i=0,1$, $j=1,2,\dots$ induces an automorphism of $X$. 
\end{enumerate}
Hence, conditions (1)-(4) from Section~\ref{secinv} hold for $X, X_0, X_1$ and for a given virtual link diagram $D$ we can define the algebraic system $X_{S,V}(D)$. Let the virtual link diagram $D$ has $n$ arcs. In order to find $X_{S,V}(D)$ label the arcs of $D$ by the tuples $(x_i,y_i)$ for $i=1,2,\dots,n$. In the neighborhood of some crossings let the labels of arcs are as on Figure~\ref{labelsnewinv}.
\begin{figure}[hbt!]
\noindent\centering{
\includegraphics[width=40mm]{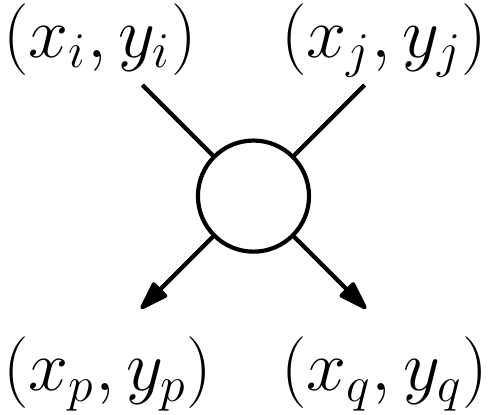}}
\caption{Labels of arcs in $D$ near a crossing.}
\label{labelsnewinv}
\end{figure}\\
Then $X_{S,V}(D)$ is the quotient of $X^{(n)}=FQ_n*T_n$ by the relations which can be written from the crossings of $D$ in the following way (described right before Theorem~\ref{ginvariant}).
\begin{align}
\notag &x_p=x_j, &&x_q=x_i*x_j,&&y_p=y_j, &&y_q=y_i,&&\text{positive crossing},\\
\label{newquandlerelations} &x_p=x_j*^{-1}x_i, &&x_q=x_i,&&y_p=y_j, &&y_q=y_i,&&\text{negative crossing},\\
\notag &x_p=x_j*^{-1}y_i, &&x_q=x_i*y_j,&&y_p=y_j, &&y_q=y_i,&&\text{virtual crossing}.
\end{align}
Since the maps $S,V$ introduced in Proposition~\ref{nweq} are biquandle switches on $X\times X_1$, then from Theorem~\ref{ginvariant} we conclude that $X_{S,V}(D)$ is the virtual link invariant. Let $L$ be a virtual link represented by a virtual link diagram $D$. Denoting by $\widetilde{Q}(L)=X_{S,V}(D)$ we have the following result. 
\begin{theorem}\label{nquaninv2}The quandle $\widetilde{Q}(L)$ is a virtual link invariant.
\end{theorem}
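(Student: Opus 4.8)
The plan is to deduce the statement directly from Theorem~\ref{ginvariant}. By construction $\widetilde{Q}(L)=X_{S,V}(D)$ for a diagram $D$ of $L$, where $X=X_0*X_1=FQ_\infty*T_\infty$ and $(S,V)$ is the virtual $2$-switch of Proposition~\ref{nweq}. So it suffices to check that all the hypotheses of Theorem~\ref{ginvariant} are in force for this choice of $X$, $X_0$, $X_1$ and $(S,V)$, and then to observe that the resulting algebraic system actually lies in the category of quandles.

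First I would verify conditions (1)--(4) from the beginning of Section~\ref{secinv}. Here $X_0=FQ_\infty$ is generated by $x^0_1,x^0_2,\dots$ and $X_1=T_\infty$ by $x^1_1,x^1_2,\dots$; since $X$ is the free product of these two quandles, both $X_0$ and $X_1$ embed as subquandles of $X$ and together their generators generate $X$, which gives (1)--(3). For (4), one checks that any finitely supported permutation $\alpha$ of $\mathbb{N}$ extends to a permutation of the full generating set $\{x^i_j\}$ preserving the two blocks, hence---by the universal property of the free product of the free and trivial quandles---induces an automorphism of $X$. By Remark~\ref{remark} this forces the $x^1_j$ either to coincide or to be pairwise distinct, and in $T_\infty$ they are pairwise distinct, which is consistent. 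These verifications are precisely the four bullet points already displayed just before the statement.

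Next I would invoke Proposition~\ref{nweq}, which tells us that $(S,V)$ is a virtual $2$-switch on $X$ and that $S$ and $V$ are biquandle switches on $X\times X_1$; this is exactly the remaining hypothesis of Theorem~\ref{ginvariant}. One also notes that the components $S_0^l,S_0^r,S_1^l,S_1^r$ and $V_0^l,V_0^r,V_1^l,V_1^r$ take values that are words in their arguments in terms of the quandle operations $*$ and $*^{-1}$, which is immediate from the explicit formulas $S(a,b;x,y)=(b,a*b;y,x)$ and $V(a,b;x,y)=(b*^{-1}x,a*y;y,x)$. Applying Theorem~\ref{ginvariant} then shows that $X_{S,V}(D)$ is unchanged, up to isomorphism, under every generalized Reidemeister move, so it depends only on the link $L$ and not on the chosen diagram; thus $\widetilde{Q}(L)$ is well defined. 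Finally, $X_{S,V}(D)$ is by definition a quotient of $X^{(n)}=FQ_n*T_n$, which is a quandle, and a quotient of a quandle is again a quandle, so $\widetilde{Q}(L)$ is a genuinely quandle-valued invariant.

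I do not expect a serious obstacle here: the entire substance of the argument is already packaged in Theorem~\ref{ginvariant} and Proposition~\ref{nweq}. The only points needing a little care are condition (4) for the free product $FQ_\infty*T_\infty$---confirming that a finitely supported permutation of the index set really induces an automorphism of the free product and not merely of the individual factors---and the bookkeeping that the construction stays inside the category of quandles rather than landing in some larger category of racks or general magmas.
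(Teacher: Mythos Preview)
Your proposal is correct and follows essentially the same route as the paper: verify conditions (1)--(4) for $X=FQ_\infty*T_\infty$, invoke Proposition~\ref{nweq} to get that $(S,V)$ is a virtual $2$-switch with $S,V$ biquandle switches on $X\times X_1$, and then apply Theorem~\ref{ginvariant}. The paper's argument is exactly this, presented in the paragraphs immediately preceding the theorem statement; your write-up simply makes explicit a couple of points (the word-in-arguments condition and the fact that the output stays in the category of quandles) that the paper leaves implicit.
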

Since $FQ_n*T_n$ is the quotient of the free quandle $FQ_{2n}$ with the generators $x_1,x_2,\dots,x_n,y_1,y_2,\dots,y_n$ by the relations $y_i*y_j=y_i$ for all $i,j=1,2,\dots,n$, the quandle $\widetilde{Q}(L)$ can be written as the quotient of $FQ_{2n}$ (where $n$ is the number of arcs in the diagram $D$ which represents the virtual link $L$) by relations (\ref{newquandlerelations}) which can be written from the crossings of $D$ and relations $y_i*y_j=y_i$ for all $i,j=1,2,\dots,n$.

The quandle $Q(L)$ introduced by Manturov in \cite{Man2} can be found in the following way. Let the diagram $D$ which represents $L$ has $n$ arcs. Label the arcs of $D$ by the symbols $x_1,x_2,\dots,x_n$. Then the quandle $Q(L)$ is the quotient of the free quandle $FQ_{n+1}$ on $n+1$ generators $x_1,x_2,\dots,x_n,y$ by the relations which can be written from the crossings of $D$ in the following way
\begin{align}
\notag &x_p=x_j, &&x_q=x_i*x_j,&&\text{positive crossing},\\
\notag &x_p=x_j*^{-1}x_i, &&x_q=x_i,&&\text{negative crossing},\\
\notag &x_p=x_j*^{-1}y, &&x_q=x_i*y,&&\text{virtual crossing},
\end{align}
where the labels of arcs are as on Figure~\ref{manturovquandle}.
\begin{figure}[hbt!]
\noindent\centering{
\includegraphics[width=20mm]{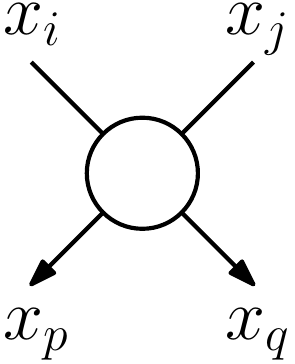}}
\caption{Labels of arcs in $D$ near a crossing.}
\label{manturovquandle}
\end{figure}
Comparing this description with the description of $\widetilde{Q}(L)$ we see, that the map which maps $x_i$ to $x_i$ for $i=1,2,\dots,n$, and maps $y_i$ to $y$ for $i=1,2,\dots,n$ is a surjective homomorphism $\widetilde{Q}(L)\to Q(L)$. It means that the quandle $\widetilde{Q}(L)$ generalizes the quandle $Q(L)$ of Manturov (and therefore it also generalizes the quandle of Kauffman~\cite{Kau}). At the same time $Q(L)$ and $\widetilde{Q}(L)$ are different. For example, if $U$ is a trivial link with $2$ components, then $Q(U)=FQ_3$, while $\widetilde{Q}(U)=FQ_2*T_2$.

Theorem~\ref{usingrepr} says how to use the representation $\varphi_{S,V}$ in order to find $X_{S,V}(D)$. If $(S,V)$ is a virtual $2$-switch described in Proposition~\ref{nweq}, then the representation $\varphi_{S,V}:VB_n\to {\rm Aut}(FQ_n*T_n)$ is denoted by $\varphi_{2Q}$, it is described in \cite[Theorem~3]{BarNas2}, and it has the following form.
\begin{align}\label{quvartin}
\varphi_{2Q}(\sigma_i):\begin{cases}x_i\mapsto x_{i+1},\\
x_{i+1} \mapsto x_i*x_{i+1},\\
y_i \mapsto y_{i+1},\\
y_{i+1}\mapsto y_i,\\
\end{cases}&&
\varphi_{2Q}(\rho_i):\begin{cases}x_i \mapsto x_{i+1}*^{-1}y_i,\\
x_{i+1} \mapsto x_i*y_{i+1},\\
y_i \mapsto y_{i+1},\\
y_{i+1}\mapsto y_i,\\
\end{cases}
\end{align}
From Theorem~\ref{usingrepr} and description (\ref{quvartin}) of the representation $\varphi_{2Q}$ we have the following description of $\widetilde{Q}(L)$.
\begin{theorem}\label{nquaninv}Let $FQ_{n}$ be the free quandle on the set of generators $\{x_1,x_2,\dots,x_n\}$, $T_n=\{y_1,y_2,\dots,y_n\}$ be the trivial quandle, and $\beta\in VB_n$ be a virtual braid. Then the quandle $\widetilde{Q}(\hat{\beta})$ can be written as
\begin{align}
\notag \widetilde{Q}(\hat{\beta})&=\Biggl\langle x_1,x_2,\dots,x_n,y_1,y_2,\dots,y_n~\Biggl|~\begin{array}{rr}
\begin{array}{rrl}
\varphi_{2Q}(\beta)(x_i)&=&x_i,\\
\varphi_{2Q}(\beta)(y_i)&=&y_i,
\end{array}&~~i=1,2,\dots,n,\\
\begin{array}{rrl}y_r*y_s&=&y_r,\end{array}&~~r,s=1,2,\dots,n.
\end{array} \Biggl\rangle,
\end{align}
 where $\varphi_{2Q}$ is defined by (\ref{quvartin}).
\end{theorem}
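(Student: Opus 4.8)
The plan is to derive this description of $\widetilde{Q}(\hat\beta)$ as an immediate specialization of the general Theorem~\ref{usingrepr} to the virtual $2$-switch $(S,V)$ from Proposition~\ref{nweq}. First I would recall that by Theorem~\ref{nquaninv2} (which in turn rests on Theorem~\ref{ginvariant} and Proposition~\ref{nweq}, since $S,V$ are biquandle switches on $X\times X_1$), the quandle $\widetilde{Q}(L)=X_{S,V}(D)$ is a well-defined virtual link invariant, so it suffices to compute $X_{S,V}(\hat\beta)$ for the chosen diagram $D=\hat\beta$. Here $X=X_0*X_1=FQ_\infty*T_\infty$ with $x_j^0=x_j$, $x_j^1=y_j$, and $X^{(n)}=FQ_n*T_n$.

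Next I would verify that $(S,V)$, viewed on $X^{(n)}$, is an automorphic virtual multi-switch with respect to the generating set $\{x_j,y_j \mid j=1,\dots,n\}$, so that Theorem~\ref{usingrepr} applies. This is exactly the content of \cite[Theorem~3]{BarNas2}: the maps $R_j=\varphi_{2Q}(\sigma_j)$ and $G_j=\varphi_{2Q}(\rho_j)$ given by the formulas (\ref{fj}), (\ref{gj}) for this particular $(S,V)$ are the assignments written out in (\ref{quvartin}), and each is an automorphism of $FQ_n*T_n$ (one checks quickly that $R_j$ and $G_j$ send generators to words in the generators and admit inverses built from $*$, $*^{-1}$ and the trivial-quandle relations). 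Granting this, Theorem~\ref{vautrepr} produces the representation $\varphi_{2Q}=\varphi_{S,V}:VB_n\to\operatorname{Aut}(FQ_n*T_n)$, and the hypotheses of Theorem~\ref{usingrepr} are met.

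Then I would simply invoke Theorem~\ref{usingrepr}: for $\beta\in VB_n$ and $D=\hat\beta$, the algebraic system $X_{S,V}(D)$ is the quotient of $X^{(n)}=FQ_n*T_n$ by the relations $\varphi_{S,V}(\beta)(x_j^i)=x_j^i$ for $i=0,1$, $j=1,\dots,n$, i.e. by $\varphi_{2Q}(\beta)(x_i)=x_i$ and $\varphi_{2Q}(\beta)(y_i)=y_i$ for $i=1,\dots,n$. Finally I would unfold the presentation of $X^{(n)}=FQ_n*T_n$ itself: as already noted right after Theorem~\ref{nquaninv2}, $FQ_n*T_n$ is the quotient of the free quandle $FQ_{2n}$ on $x_1,\dots,x_n,y_1,\dots,y_n$ by the relations $y_r*y_s=y_r$ for all $r,s=1,\dots,n$ (these relations make the $y$-generators span a trivial quandle, while leaving the $x$-generators free and the product free). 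Substituting this presentation into the quotient above yields precisely the displayed presentation of $\widetilde{Q}(\hat\beta)$, completing the proof.

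The only genuinely non-routine point is the bookkeeping that identifies the abstract maps $R_j,G_j$ of (\ref{fj})--(\ref{gj}), specialized to the $(S,V)$ of Proposition~\ref{nweq}, with the explicit formulas (\ref{quvartin}); but this is quoted from \cite[Theorem~3]{BarNas2}, so in the present paper it can be cited rather than reproved. Everything else is a direct application of Theorem~\ref{usingrepr} together with the elementary observation about the presentation of $FQ_n*T_n$, so I do not anticipate any real obstacle.
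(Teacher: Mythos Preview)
Your proposal is correct and follows exactly the route the paper takes: the paper simply states that the result follows from Theorem~\ref{usingrepr} together with the description~(\ref{quvartin}) of $\varphi_{2Q}$, and you have spelled out precisely this derivation, including the observation (also made in the paper just after Theorem~\ref{nquaninv2}) that $FQ_n*T_n$ is the quotient of $FQ_{2n}$ by the relations $y_r*y_s=y_r$.
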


{\small

\medskip

\medskip

\noindent
Valeriy Bardakov $^{1, 2, 3, 4}$ (bardakov@math.nsc.ru),\\
 Timur Nasybullov $^{1, 2, 3}$ (ntr@math.nsc.ru)\\
 ~\\
$^1$ Tomsk State University, pr. Lenina 36, 634050 Tomsk, Russia,\\
$^2$ Sobolev Institute of Mathematics, Acad. Koptyug avenue 4, 630090 Novosibirsk, Russia,\\
$^3$ Novosibirsk State University, Pirogova 1, 630090 Novosibirsk, Russia,\\
$^4$ Novosibirsk State Agricultural University, Dobrolyubova 160, 630039 Novosibirsk, Russia,\\
}

\end{document}